\newtheorem{thm}{Theorem}[section]
\newtheorem{cor}[thm]{Corollary}
\newtheorem{prop}[thm]{Proposition}
\newtheorem{lem}[thm]{Lemma}
\newtheorem{conj}[thm]{Conjecture}
\theoremstyle{definition}
\newtheorem{defn}[thm]{Definition}
\newtheorem{asp}[thm]{Assumption}
\theoremstyle{remark}
\newtheorem{rem}[thm]{Remark}
\newcommand{\RN}[1]{%
	\textup{\uppercase\expandafter{\romannumeral#1}}%
}
\newcounter{x}
\newcommand{\norm}[2]{L^{#1}_tL^{#2}_x}
\newcommand{\hnorm}[1]{L^{#1}_{t,x}}
\newcommand{\hs}{\mathbb{H}^d}
\numberwithin{equation}{section}
\title{Almost sharp global wellposedness and scattering for the defocusing conformal wave equation on the hyperbolic space }
\author{Chutian Ma}
\address{Department of Mathematics, Johns Hopkins University, MD 21218}
\email{cma27@jhu.edu}
\date{December 8, 2024}
\begin{document}
\begin{abstract}
    In this paper we prove a global well-posedness and scattering result for the defocusing conformal nonlinear wave equation in the hyperbolic space $\mathbb{H}^d, d \geq 3$. We take advantage of the hyperbolic geometry which yields stronger Morawetz and Strichartz estimates. We show that the solution is globally wellposed and scatters if the initial data is radially symmetric and lies in $H^{\frac{1}{2}+\epsilon}(\mathbb{H}^d)\times H^{-\frac{1}{2}+\epsilon}(\mathbb{H}^d)$, $\epsilon>0$. 
\end{abstract}
\maketitle

\section{Introduction}

In this paper, we study the low regularity global well-posedness problem for the defocusing nonlinear wave equation on the hyperbolic space defined below:
\begin{equation}\label{eq:nlw}
    \left\{\begin{aligned}
        & u_{tt} - \Delta_{\mathbb{H}^d}u + \mu |u|^{p-1}u = 0 \\
        & u(0,x) = u_0 \\
        & u_t(0,x) = u_1 \\
    \end{aligned}\right.
\end{equation}
The term $\mu |u|^{p-1} u$ is known as non-linearity of power type. The equations are called defocusing when $\mu = 1$, and focusing when $\mu = -1$. \\

In this paper, we consider strong $H^s$ solutions as defined in \cite{tao2006nonlinear}:
\begin{defn}[Strong $H^s$ solutions]
    Given initial data $(u_0, u_1) \in H^s \times H^{s-1}$ where $H^s, H^{s-1}$ are the usual Sobolev spaces on the hyperbolic space, we say that u is a strong $H^s$ solution to the wave equation (\ref{eq:nlw}) on time interval I if $u \in C^0_{t, loc} H^s_x(I\times \hs)$, $u_t \in C^0_{t, loc} H^{s-1}_x(I\times \hs)$ solves (\ref{eq:nlw}) as distribution, i.e.
    \begin{equation}
        \begin{aligned}
            u(t) = cos(t\sqrt{-\Delta_{\hs}}) u_0 + \frac{sin(t\sqrt{-\Delta_{\hs}})}{\sqrt{-\Delta_{\hs}}} u_1 - \mu \int_0^t \frac{sin((t-s)\sqrt{-\Delta_{\hs}}}{\sqrt{-\Delta_{\hs}}} |u|^{p-1}(s) u(s) ds
        \end{aligned}
    \end{equation}
\end{defn}

In addition to the relative weak concept of strong solution, well-posedness is usually defined and expected given "good" initial data. In short, a well-posed solution depends continuously on the initial values. To make it precise:
\begin{defn}[Well-posedness]
    We say the initial value problem is locally well-posed in $H^s\times H^{s-1}$ if for any $(u_0,u_1)\in H^s\times H^{s-1}$ there exist $T>0$ and a neighborhood U of $(u_0,u_1)$ in $H^s\times H^{s-1}$, such that for any $(u_0',u_1')\in U$, there exists a unique solution u in $C_t^0H_x^s([-T,T]\times \hs)\times C^0_tH_x^{s-1}([-T,T]\times \hs)$. Furthermore, the map $u_0',u_1'\rightarrow u$ is continuous in U. If $T>0$ can be chosen arbitrarily large (we do not require T to be $+\infty$), then we say the initial value problem is globally well-posed. \\
\end{defn}
A natural question is how the solution evolves, including whether the solution is locally and globally well-posed, and its asymptotic behavior. For initial data with sufficient regularity and decay, solutions of nonlinear equations are known to converge to a linear solution as $t\rightarrow \infty$. This phenomenon is called scattering:
\begin{defn}
    (Scattering). Suppose u solves (\ref{eq:nlw}) globally in time. We say that u scatters in $H^s\times H^{s-1}$ if there exists $(u_0^+,u_1^+)$ and $(u_0^-,u_1^-)$ in $H^s\times H^{s-1}$ such that 
    \begin{equation*}
        \begin{aligned}
            &(u,u_t)-S(t)(u_0^+,u_1^+)\rightarrow 0\ in\ H^s\times H^{s-1}\ as\ t\rightarrow +\infty \\   
            &(u,u_t)-S(t)(u_0^-,u_1^-)\rightarrow 0\ in\ H^s\times H^{s-1}\ as\ t\rightarrow -\infty
        \end{aligned}
    \end{equation*}
    where S(t) is the propagator of the free wave equation.
\end{defn}

\subsection{Main results}
In this paper, we study the defocusing equations (\ref{eq:nlw}) on hyperbolic spaces with radially symmetric initial data. Hyperbolic geometry is known to yield stronger dispersion while the radially symmetric solutions tend to spread out and are less likely to concentrate. As a result, we prove the following global well-posedness and scattering result for the conformal wave equation (\ref{eq:nlw}). Assume the initial data $(u_0,u_1)$ satisfies 
\begin{equation}\label{IVP}
    \begin{aligned}
        &(u_0,u_1)\ is\ radially\ symmetric,\\
        &(u_0,u_1)\in H^{\frac{1}{2}+\epsilon}\times H^{-\frac{1}{2}+\epsilon},\hspace*{2em} \epsilon>0
    \end{aligned}
\end{equation}
then 
\begin{thm}[Global wellposedness and scattering for (\ref{eq:nlw})]\label{Main Thm}
    When $d \geq 3$, the solution to the defocusing conformal wave equation $(\ref{eq:nlw})$, i.e. $\mu = 1$ and $p = 1 + \frac{4}{d-1}$ is globally well-posed and scatters under the assumption $(\ref{IVP})$. Moreover, there exists function $f_\epsilon: \mathbb{R}\rightarrow \mathbb{R}$, such that 
    \begin{equation}
        \|u\|_{L^{\frac{2(d+1)}{d-1}}_{t,x}}\leq f_\epsilon(\|(u_0,u_1)\|_{H^{\frac{1}{2}+\epsilon}\times H^{-\frac{1}{2}+\epsilon}})
    \end{equation}
    In other word, the scattering norm can be controlled by the (non-critical) norm, not the profile.
\end{thm}
\begin{rem}
    This result is almost sharp, in the sense that the regularity restriction in Theorem \ref{Main Thm} can be brought close to the restriction predicted by the scaling symmetry (see (\ref{eq s_c})) by letting $\epsilon \rightarrow 0$. 
\end{rem}
\begin{rem}
    The proof for scattering is simplified on $\hs$ compared to the Euclidean proof, e.g. in \cite{dodson2018global}. This is due to a stronger Morawetz inequality provided by hyperbolic geometry.
\end{rem}

% ----------------------------------------------------------------

%------------- introduce scaling and conformal -------------------
\subsection{Review of Backgrounds}

For wave equation (\ref{eq:nlw}), having $p \geq 2$ is not enough to ensure global scatter solution, or even local solutions. 
Let us review the Euclidean version of (\ref{eq:nlw}):
\begin{equation}\label{eq nlw euclidean}
    u_{tt} - \Delta_{\mathbb{R}^d} u + \mu |u|^{p-1}u = 0
\end{equation}
The equation (\ref{eq nlw euclidean}) is invariant under the following scaling symmetry. Suppose u solves (\ref{eq:nlw}) on the interval $[0, T]$. Denote
\begin{equation}\label{eq scaling}
    u_\lambda (t,x) = \lambda^\frac{2}{p-1} u(\lambda t, \lambda x)
\end{equation}
Then $u_\lambda$ solves (\ref{eq:nlw}) on $[0,\frac{T}{\lambda}]$. Denote
\begin{equation}
    s_c = \frac{d}{2} - \frac{2}{p-1}
\end{equation}
. We have
\begin{equation}
    \begin{aligned}
        & \| u_\lambda (0, \cdot) \|_{H^{s_c}} = \|u\|_{H^{s_c}} \\
        & \| \partial_t u_\lambda (0, \cdot) \|_{H^{s_c-1}} = \|\partial_t u(0,\cdot)\|_{H^{s_c-1}}
    \end{aligned}
\end{equation}
The problem is called $s_c$ critical as the $H^{s_c} \times H^{s_c-1}$ norm is invariant under the scaling transformation. The case where $s > s_c$ is called sub-critical, and $s < s_c$ is called super-critical.

Scaling symmetry implies the following restriction on p and s must be satisfied in order for well-posedness to hold:
\begin{equation}\label{eq s_c}
    s \geq s_c
\end{equation}

The heuristic is that in sub-critical cases, by choosing a small $\lambda$, one may trade longer existence in time at the cost of smaller $H^s \times H^{s-1}$ norm of the initial data. Or vice versa, a larger $H^s \times H^{s-1}$ tends to allow shorter time of existence. In super-critical case, scaling symmetry suggests that evolving small data in short time is as hard as evolving large data in long time. In particular, when $s < s_c$, suppose $(u, u_t) \in H^s \times H^{s-1}$ solves (\ref{eq:nlw}) with maximum interval of existence $[0, T]$. Due to finite speed of propagation, we may assume the data to have compact support. Then scaling symmetry allows us to generate solutions with arbitrary small $H^s$ norm, support and interval of existence. Properly translating and summing yields initial data in $H^s$ which fails to be extended in time at all. The authors of \cite{christ2003ill} constructed ill-posed data by using a small dispersion analysis paired with scaling. They showed for defocusing wave and Schr\"{o}dinger equations, when $s < s_c$, there exists solution with arbitrarily small initial data in $H^s$ norm which grows rapidly in arbitrarily small interval of time, which are counterexamples to well-posedness below critical regularity.

Above the critical regularity, it is conjectured that (\ref{eq s_c}) guarantees global existence in the super-conformal range: $s_c \geq \frac{1}{2}$.
\begin{conj}
    Assume $d \geq 3$ and $s \geq s_c$. Then the initial value problem is conjectured to be globally well-posed in $H^{s} \times H^{s-1}$.
\end{conj}

\begin{rem}
    For focusing wave equations, there is an additional obstacle to well-posedness which is concentration along one light ray. See \cite{lindblad1995existence} for construction of counterexamples. This imposes another restriction on p, s that
    \begin{equation}\label{eq subconformal}
        s \geq s_{conf} = \frac{d+1}{4} - \frac{1}{p-1}
    \end{equation}. This restriction only becomes stricter when $p < 1 + \frac{4}{d-1}$, or in other words $s_c < \frac{1}{2}$. The case where $s_c = s_{conf} = \frac{1}{2}$ is called \textit{conformal} as the equation is invariant under conformal transformations. 
\end{rem}

% On the other hand, the advantage provided by hyperbolic geometry is not strong enough to push the well-posedness conjecture below the critical regularity $s_c$.

When $s \geq s_c$, one starts with local well-posedness. In general, local well-posedness results are relatively easy to obtain. They usually follows from Strichartz estimates and standard contraction mapping argument. Extending the solution globally is harder. A useful tool to is the conservation law. Wave equation (\ref{eq:nlw}) has conserved energy, defined by
\begin{equation}\label{eq defn energy}
    E(t)=\int \frac{1}{2}|\nabla u|^2 + \frac{1}{2}u_t^2 + \mu \frac{1}{p+1}u^{p+1}
\end{equation}
, assuming u has the required regularity and decay for (\ref{eq defn energy}) to make sense. In energy subcritical case, local well-posedness results ensure the solution can be extended over a small time interval whose length depends on the $H^1 \times L^2$ norm of the solution. For defocusing equations, i.e. $\mu = 1$, the $H^1 \times L^2$ norm of the solution is controlled by the energy. This immediately proves global well-posedness.

Below energy state, i.e. when the initial data is in $H^s \times H^{s-1}$ for some $s < 1$, the major difficulty to proving Theorem \ref{Main Thm} is the lack of a conserved quantity which controls the $H^s \times H^{s-1}$ norm. Thus local well-posedness alone is not sufficient to extend the solution globally.

\subsection{Review of past results}
In Euclidean spaces, such rough data results are well established for (\ref{eq:nlw}) as well as a wide range of dispersive equations. Bourgain in \cite{bourgain1998scattering} studied cubic defocusing NLS below energy norm using the Fourier truncation method, proving global well-posedness for $s>\frac{11}{13}$. In short, Fourier truncation method splits the initial data into a high frequency part and a low frequency part and evolves them under respective equations. The high frequency evolution is rough but has small size, and will be treated as a perturbation. The low frequency evolution is large but has finite initial energy, which will be treated as a smoothed-up approximating solution. Since the high frequency evolution is small, its interaction with the low frequency evolution in the nonlinear terms is expected to be small, which causes the low frequency energy to be "almost conserved". \\

Various later works have employed similar methods in studying rough data IVP for wave equations. \cite{kenig2000global} proved global well-posedness for cubic defocusing NLW in $\mathbb{R}^3$ for $\frac{3}{4}<s<1$. See also \cite{gallagher2003global}, \cite{bahouri2006global}, \cite{roy2007global}, \cite{dodson2018global}, \cite{dodson2018globalBesov} \cite{dodson2019global}, \cite{dodson2022global}. \\

Despite the fact that there is no scaling in hyperbolic spaces, the conjecture is still believed to be true on hyperbolic spaces. In fact, it has long been known that hyperbolic geometry strengthens the dispersion. Heuristically, this is because the volume on hyperbolic space grows exponentially. This fact provides larger room for the solution to disperse into.

For the Schr\"{o}dinger equation, Banica in \cite{banica2007nonlinear} proved dispersive estimates on $\hs$ which are stronger away from the origin compared to the Euclidean counterparts. In particular, radial symmetry data tends to spread out and is less likely to concentrate. Banica and Duyckaerts in \cite{banica2007weighted} proved Strichartz estimates for Schr\"{o}dinger equation on $\hs$ which indicates stronger decay at spacial infinity. 

The same heuristic holds for wave equations on $\hs$. \cite{pierfelice2008weighted} derived weighted (stronger than Euclidean) Strichartz estimates for radially symmetric wave equations on Damek-Ricci spaces (including $\hs$). Anker in \cite{anker2012wave} studied shifted wave equations on $\hs$ where the Laplacian is shifted to compensate for its spectral gap. Strichartz estimates with a large family of admissible exponents are derived, as well as global well-posedness result for small data as an application. On $\mathbb{H}^3$ in particular, Taylor and Metcalfe in \cite{metcalfe2011nonlinear} studied wave equations and established small data global well-posedness results for a broader range of power compared to that on Euclidean spaces. Tatatu in \cite{tataru2001strichartz} derived dispersive estimates for shifted wave equations on $\hs$ and transferred them back to the Euclidean space in order to prove global well-posedness results. See also \cite{metcalfe2012dispersive}, \cite{anker2014wave}, \cite{hassani2011wave}. 

\subsection{Outline of the paper}

The outline of this paper is stated below.\\
In section 2, we list some preliminary results regarding analysis on $\hs$, as well as proving Strichartz estimate for wave equation with $H^\frac{1}{2} \times H^{-\frac{1}{2}}$ initial data, which is slightly better than that in \cite{anker2012wave}.  \\
In section 3, we construct a series of Morawetz potentials. In fact, some of the Morawetz potential functions we choose are not permitted on Euclidean spaces.  \\
In section 4, we prove Theorem \ref{Main Thm} using Fourier truncation method. We will split the initial data into a small, rough part and a large, smooth part. The small rough data evolves globally in time due to small data result. Thus our goal is to prove the global well-posedness of the evolution of the large smooth data, which satisfies the conformal wave equation with an error term caused by the interaction between the rough and smooth evolution. Our main idea is to utilize the Morawetz potentials to control these interaction terms. 

\section{Preliminaries}

We clarify some of the notations and abbreviation we will use throughout the paper. If $X, Y \geq 0$, we write $X\lesssim Y$ if there exists a global constant C, such that $X\leq CY$. We write $X\sim Y$ if $X\lesssim Y$ and $Y\lesssim X$. When there is no risk of confusion, we will write $\|\cdot\|_p$ instead of $\|\cdot\|_{L^p_x}$.

\subsection{Analysis on $\mathbb{H}^d$}
	Let $\mathbb{R}^{d+1}$ be the standard Minkowski space endowed with the metric $-dx_0^2+dx_1^2+dx_2^2+...+dx_d^2$ and the bilinear form $[x,y]=x_0y_0-x_1y_1-x_2y_2-...-x_dy_d$. Hyperbolic space $\mathbb{H}^d$ is defined as the submanifold satisfying $[x,x]=1$ whose metric is induced from the Minkowski space. \\
	We will use the polar coordinates for our analysis on $\mathbb{H}^d$. We use the pair $(r,\omega)\in \mathbb{R}\times S^{d-1}$ to represent the point $(coshr,sinhr\omega)$ in the previous model. The metric induced is \\
	\begin{equation*}
	    g_{\mathbb{H}^d}=dr^2+sinh^2rd\omega^2	
	\end{equation*}
    where $d\omega^2$ is the standard metric on the sphere $\mathbb{S}^{d-1}$.
	And integrations are represented by
	\begin{equation*}
	    \int_{\mathbb{H}^d}	fdvol=\int_0^\infty\int_{\mathbb{S}^{d-1}}f(r,\omega)sinh^{d-1}rdrd\omega
	\end{equation*}

    The definition can be extended to tensor fields $F$, where $|F|$ is defined as
    \begin{equation}\label{defn tensor metric}
        |F| = g_{\mu_1\lambda_1}...g_{\mu_p\lambda_p}g^{\nu_1\kappa_1}...g^{\nu_q\kappa_q} F^{\mu_1...\mu_p}_{\nu_1...\nu_q} F^{\lambda_1...\lambda_p}_{\kappa_1...\kappa_q}
    \end{equation}

    \subsection{Heat flow based Littlewood Paley operators}
    In order to study the frequency interaction, we need a tool which functions in $\hs$ like Littlewood Paley operators in Euclidean spaces. For that purpose, heat flow based operators have been developed and used widely to localize frequencies on manifolds, see \cite{klainerman2006geometric}, \cite{lawrie2018local}, \cite{staffilani2020high} for examples of their applications. In this paper, we will use the notations from \cite{staffilani2020high}. Below we list the definition and some properties of heat flow based operators.

    The Laplacian-Beltrami operator on $\mathbb{H}^d$ can be written in polar coordinates as
    \begin{equation}
        \Delta_{\mathbb{H}^d}=\partial_r^2+\frac{(d-1)coshr}{sinhr}\partial_r+\frac{1}{sinh^2r}\Delta_{\mathbb{S}^{d-1}}
    \end{equation}
    We recall that the spectrum of $-\Delta_{\mathbb{H}^d}$ is $[\rho^2,+\infty]$ with $\rho=\frac{d-1}{2}$. Thus we have the following Poincar\'e inequality:
    \begin{lem}[Poincar\'e inequality]
        For $0\leq\alpha\leq\beta<\infty$ and smooth function f, we have 
        \begin{equation*}
            \|(-\Delta)^\frac{\alpha}{2} f\|_2\leq \rho^{\alpha - \beta} \|(-\Delta)^\frac{\beta}{2} f\|_2
        \end{equation*}
    \end{lem}

    \begin{defn}[Heat Flow Based Frequency Projection Operators]
        For any $s>0$, we define
        \begin{equation*}
            P_{\geq s}f=e^{s\Delta}f,\hspace*{2em} P_sf=(-s\Delta)e^{s\Delta}f,\hspace*{2em} P_{<s}f=f-P_{\geq s}f        
        \end{equation*}
    \end{defn}

    \begin{lem}[Lemma 2.9 in \cite{staffilani2020high} ]
        Heat flow operators are bounded in $L^p$. For $1<p<+\infty$
        \begin{equation*}
            \|P_{\geq s}f\|_p+\|P_sf\|_p+\|P_{<s}f\|_p\lesssim \|f\|_p
        \end{equation*}
    \end{lem}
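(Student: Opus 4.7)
The plan is to establish the three bounds separately, in increasing order of difficulty. The boundedness of $P_{\geq s} = e^{s\Delta}$ on $L^p$ for every $1 \leq p \leq \infty$ is the standard semigroup contraction property: on $\mathbb{H}^3$ the heat kernel $p_s(x,y)$ is pointwise positive and satisfies $\int_{\mathbb{H}^3} p_s(x,y)\,d\mu(y) \leq 1$, so Minkowski's integral inequality (equivalently, Schur's test) gives $\|e^{s\Delta}f\|_p \leq \|f\|_p$. The bound on $P_{<s}f = f - P_{\geq s}f$ then follows by the triangle inequality with constant $2$.

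The substantive part is the bound on $P_s f = (-s\Delta)e^{s\Delta}f$. My plan is to use the semigroup identity $s\Delta e^{s\Delta} = s\partial_s e^{s\Delta}$, so that the integral kernel of $P_s$ is $K_s(x,y) = -s\,\partial_s p_s(x,y)$. I would then invoke the explicit heat kernel on $\mathbb{H}^3$,
\begin{equation*}
p_s(r) = \frac{1}{(4\pi s)^{3/2}}\,\frac{r}{\sinh r}\,e^{-s - r^2/(4s)}, \qquad r = d(x,y),
\end{equation*}
and differentiate directly to obtain
\begin{equation*}
K_s(r) = p_s(r)\left[\tfrac{3}{2} + s - \tfrac{r^2}{4s}\right].
\end{equation*}
Since $K_s$ is radial, the isometry group of $\mathbb{H}^3$ acts transitively, so Schur's test reduces to verifying
\begin{equation*}
\int_0^\infty |K_s(r)|\,4\pi\sinh^2 r\,dr \;\lesssim\; 1
\end{equation*}
uniformly in $s>0$.

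The main obstacle is precisely this uniform control of the integral across two very different regimes. After the rescaling $u = r/(2\sqrt{s})$, the factor $e^{-s}\sinh^2(2\sqrt{s}u)$ from the volume element combines with $e^{-u^2}$ to produce Gaussian decay of the form $e^{-(u-\sqrt{s})^2}$, but one must carefully balance the polynomial prefactor $r^2/s$ against this decay. For small $s$ the kernel concentrates near the diagonal and the computation mirrors the Euclidean case; for large $s$, however, the exponential volume growth $\sinh^2 r \sim e^{2r}/4$ is potentially delicate and must be absorbed by the exponential factor $e^{-s}$ produced by the spectral gap $(d-1)^2/4 = 1$ on $\mathbb{H}^3$ (which is built into the heat kernel formula above). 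Once this bookkeeping is done, the Schur bound is uniform in $s$, completing the proof. Alternatively, one can simply invoke the statement as proved in \cite{staffilani2020high}.
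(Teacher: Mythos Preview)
The paper does not give its own proof of this lemma; it simply quotes it from \cite{staffilani2020high}. Your treatment of $P_{\geq s}$ (positivity and stochastic completeness of the heat kernel, hence contraction on every $L^p$) and of $P_{<s}$ (triangle inequality) is correct and standard.

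The genuine gap is in your Schur--test argument for $P_s$. The claim that ``once this bookkeeping is done, the Schur bound is uniform in $s$'' is false: the $L^1$ norm of the kernel $K_s$ grows like $\sqrt{s}$ as $s\to\infty$. Indeed, for large $s$ the heat kernel measure on $\mathbb{H}^3$ concentrates near $r\approx 2s$ with Gaussian fluctuations of order $\sqrt{s}$ (the radial Brownian motion drifts outward at speed $2$). Writing $r=2s+2\sqrt{s}\,v$ with $v$ of order one, your bracket becomes
\[
\tfrac{3}{2}+s-\tfrac{r^2}{4s}=\tfrac{3}{2}-2\sqrt{s}\,v-v^2,
\]
whose absolute value has expectation of order $\sqrt{s}$ against the heat kernel probability measure. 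Hence $\int_{\mathbb{H}^3}|K_s|\,d\mu\sim\sqrt{s}$, and in fact $P_s$ is \emph{not} uniformly bounded on $L^1(\mathbb{H}^3)$; this is consistent with the lemma being stated only for $1<p<\infty$.

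A correct route is to combine the kernel bound with the $L^2$ spectral bound. From the spectral gap $-\Delta\geq 1$ one has $\|P_s\|_{L^2\to L^2}=\sup_{\lambda\geq 1}s\lambda e^{-s\lambda}=se^{-s}$ for $s\geq 1$; interpolating this exponential decay against your $O(\sqrt{s})$ Schur bound on $L^1$ (and its dual on $L^\infty$) yields a bound that is uniform, indeed decaying, for every fixed $1<p<\infty$. For $0<s\leq 1$ your Schur computation is genuinely uniform, since the kernel is essentially Euclidean there. Alternatively, and more conceptually, one may invoke that the heat semigroup on $\mathbb{H}^3$ is bounded analytic on $L^p$ for each $1<p<\infty$, which immediately gives $\|s\Delta e^{s\Delta}\|_{L^p\to L^p}\lesssim 1$.
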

    
    \begin{lem}[Bernstein Inequalities, Lemma 2.19 in \cite{staffilani2020high}]\label{lem:hfo1}
        For $0\leq\beta<\alpha<\beta+1$,
        \begin{equation*}
            \begin{aligned}
                &\|(-\Delta)^\beta P_{\leq s}f\|_2\lesssim s^{\alpha-\beta}\|(-\Delta)^\alpha f\|_2\\
                &\|(-\Delta)^\alpha P_{\geq s}f\|_2\lesssim s^{\beta-\alpha}\|(-\Delta)^\beta f\|_2
            \end{aligned}
        \end{equation*}
    \end{lem}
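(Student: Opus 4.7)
The plan is to reduce both inequalities to pointwise bounds on spectral multipliers via the self-adjoint functional calculus for $-\Delta_{\mathbb{H}^3}$. Since $-\Delta$ is self-adjoint and non-negative on $L^2(\mathbb{H}^3)$ (indeed, its spectrum lies in $[1,\infty)$ by the Plancherel/spherical analysis on $\mathbb{H}^3$), the spectral theorem gives
\begin{equation*}
    \|\phi(-\Delta)g\|_2 \leq \|\phi\|_{L^\infty(\mathrm{spec}(-\Delta))}\,\|g\|_2
\end{equation*}
for every bounded Borel function $\phi$, and the operators $e^{t\Delta}$ and $(-\Delta)^\gamma$ commute and satisfy the obvious composition rules. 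The strategy is to write each side as $\phi(-\Delta)$ applied to $(-\Delta)^{\beta}f$ or $(-\Delta)^{\alpha}f$, and then to estimate $\|\phi\|_\infty$ explicitly in $s$.

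For the second inequality, I factor
\begin{equation*}
    (-\Delta)^\alpha P_{\geq s}f \;=\; \bigl[(-\Delta)^{\alpha-\beta}e^{s\Delta}\bigr](-\Delta)^\beta f,
\end{equation*}
whose multiplier is $\phi(\mu)=\mu^{\alpha-\beta}e^{-s\mu}=s^{\beta-\alpha}(s\mu)^{\alpha-\beta}e^{-s\mu}$. Since $\alpha-\beta>0$, the function $x\mapsto x^{\alpha-\beta}e^{-x}$ is bounded on $[0,\infty)$, so $\|\phi\|_\infty\lesssim s^{\beta-\alpha}$ and the desired estimate follows immediately from the spectral calculus bound.

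For the first inequality, I use the fundamental identity $1-e^{-s\mu}=\int_0^s \mu e^{-\tau\mu}\,d\tau$, which at the operator level reads
\begin{equation*}
    P_{\leq s}f \;=\; f - e^{s\Delta}f \;=\; \int_0^s (-\Delta)e^{\tau\Delta}f\,d\tau.
\end{equation*}
Applying $(-\Delta)^\beta$ and inserting the identity $(-\Delta)^{\beta+1}=(-\Delta)^{\beta+1-\alpha}(-\Delta)^\alpha$ gives
\begin{equation*}
    (-\Delta)^\beta P_{\leq s}f \;=\; \int_0^s (-\Delta)^{\beta+1-\alpha}e^{\tau\Delta}(-\Delta)^\alpha f\,d\tau.
\end{equation*}
The spectral multiplier of the inner operator is $\psi_\tau(\mu)=\mu^{\beta+1-\alpha}e^{-\tau\mu}=\tau^{\alpha-\beta-1}(\tau\mu)^{\beta+1-\alpha}e^{-\tau\mu}$; boundedness of $x^{\beta+1-\alpha}e^{-x}$ requires exactly $\beta+1-\alpha>0$, giving $\|\psi_\tau\|_\infty\lesssim \tau^{\alpha-\beta-1}$. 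Then Minkowski's inequality yields
\begin{equation*}
    \|(-\Delta)^\beta P_{\leq s}f\|_2 \;\lesssim\; \Bigl(\int_0^s \tau^{\alpha-\beta-1}\,d\tau\Bigr)\|(-\Delta)^\alpha f\|_2 \;\lesssim\; s^{\alpha-\beta}\|(-\Delta)^\alpha f\|_2,
\end{equation*}
where integrability near $\tau=0$ is precisely the condition $\alpha>\beta$.

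There is no serious obstacle here; the proof is essentially a bookkeeping exercise in functional calculus. The only subtle point is that the two hypotheses $\beta<\alpha$ and $\alpha<\beta+1$ appear in exactly the places one needs: $\alpha>\beta$ secures convergence of the $\tau$-integral near zero, and $\alpha<\beta+1$ secures boundedness of the spectral multiplier $\mu^{\beta+1-\alpha}e^{-\tau\mu}$ near $\mu=0$. (On $\mathbb{H}^3$ one actually has a spectral gap and could relax the low-frequency concern, but the above proof needs no such input and works on any manifold with non-negative self-adjoint Laplacian.)
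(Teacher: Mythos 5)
Your proof is correct. Note that the paper does not prove this lemma at all --- it is quoted verbatim from Lemma 2.19 of \cite{staffilani2020high} --- and your argument via the self-adjoint functional calculus (bounding the multipliers $\mu^{\alpha-\beta}e^{-s\mu}$ and, after writing $1-e^{-s\mu}=\int_0^s\mu e^{-\tau\mu}\,d\tau$, the multipliers $\mu^{\beta+1-\alpha}e^{-\tau\mu}$, with the hypotheses $\beta<\alpha<\beta+1$ entering exactly where you say) is the standard proof of these heat-flow Bernstein inequalities and is essentially the one given in that reference, so there is nothing to add.
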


    \subsection{Sobolev spaces on $\mathbb{H}^d$}
    Sobolev spaces on hyperbolic spaces can be defined using (\ref{defn tensor metric}). In fact, for integer $k \geq 0$ and $p > 1$, we can define the norm on $W^{k,p}(\hs)$ to be the complement of $C^\infty(\hs)$ under the norm
    \begin{equation}
        \|f\|_{W^{k,p}(\hs)} = \sum_{j=0}^k \left( \int_{\hs} |\nabla^{(j)} f|^p d vol \right)^{\frac{1}{p}}
    \end{equation}
    where $|\nabla^{j}f|$ is defined as in (\ref{defn tensor metric}).
    Fraction Sobolev spaces $W^{s,p}$ for $s\in \mathbb{R}^+$ are then defined by interpolation.
    Alternatively, one may define Sobolev spaces through the spectral theory of Laplacian.
    For $1<p<+\infty$, $s\in\mathbb{R}$, define the norm $\|\cdot\|_{\tilde{W}^{s,p}}$ to be
    \begin{equation*}
        \|f\|_{\tilde{W}^{s,p}}=\|(-\Delta)^\frac{s}{2}f\|_p
    \end{equation*}
    and Sobolev space $\tilde{W}^{s,p}$ to be the complement of $C^\infty(\hs)$ under this norm.
    We denote $H^s$ to be $W^{s,2}(\hs)$ in this paper. When $s \geq 0 $ and $p > 1$, these two norms are equivalent.
    \begin{equation*}
        \|f\|_{W^{s,p}} \sim \|f\|_{\tilde{W}^{s,p}}
    \end{equation*}
    See \cite{tataru2001strichartz} for details.

    We have Sobolev embedding
    \begin{lem}[Sobolev Embedding on $\hs$]
        If $1\leq p<q<\infty$ and $\frac{1}{q}=\frac{1}{p}-\frac{s}{d}$, then we have the following embedding
        \begin{equation*}
            W^{s,p}\hookrightarrow L^q
        \end{equation*}
    \end{lem}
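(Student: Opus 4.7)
The natural approach is to reduce the embedding to an $L^p\to L^q$ mapping property of the Riesz potential $(-\Delta)^{-s/2}$. Since $\|f\|_{W^{s,p}}=\|(-\Delta)^{s/2}f\|_p$ by definition, setting $g=(-\Delta)^{s/2}f$ reduces the claim to showing
\begin{equation*}
\|(-\Delta)^{-s/2}g\|_q \lesssim \|g\|_p \qquad \text{whenever} \qquad \tfrac{1}{q}=\tfrac{1}{p}-\tfrac{s}{3}.
\end{equation*}

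First I would represent the Riesz potential through the subordination formula
\begin{equation*}
(-\Delta)^{-s/2}g = \frac{1}{\Gamma(s/2)}\int_0^\infty t^{s/2-1}e^{t\Delta}g\,dt,
\end{equation*}
and then invoke heat kernel estimates on $\hs$. Because $\hs$ has bounded geometry and dimension $3$, for small $t$ the heat kernel obeys Euclidean scaling, yielding $\|e^{t\Delta}\|_{L^p\to L^q}\lesssim t^{-\frac{3}{2}(\frac{1}{p}-\frac{1}{q})}$; the spectral gap $-\Delta\geq 1$ additionally supplies exponential decay $\|e^{t\Delta}\|_{L^p\to L^q}\lesssim e^{-ct}$ for large $t$. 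Both bounds follow by Riesz--Thorin interpolation from the endpoint ultracontractivity estimate $\|e^{t\Delta}\|_{L^1\to L^\infty}\lesssim t^{-3/2}e^{-t}$, which in turn is a consequence of the explicit formula for the heat kernel on $\hs$.

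The main obstacle is that, under the exact scaling $\tfrac{1}{q}=\tfrac{1}{p}-\tfrac{s}{3}$, substituting these bounds into the subordination integral produces $\int_0^\infty t^{-1}e^{-ct}\,dt$, which diverges logarithmically at the origin. The standard remedy is to pass through weak-type estimates: for fixed $\lambda>0$, split the $t$-integral at a threshold depending on $\lambda$ and $\|g\|_p$; control the small-$t$ piece using the $L^p$-boundedness of $e^{t\Delta}$ and the large-$t$ piece using the $L^p\to L^\infty$ ultracontractivity bound; then optimize in the threshold to obtain a weak-type $(p,q)$ inequality for $(-\Delta)^{-s/2}$. Marcinkiewicz interpolation between two such weak-type bounds at nearby exponents then upgrades the estimate to strong type.

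As an alternative route I would keep in reserve, one could decompose $f$ via the heat-flow frequency projections $P_{\geq s_0}$ introduced in Section~2 and sum $L^p\to L^q$ Bernstein-type gains across dyadic scales, exploiting the fact that the spectrum of $-\Delta$ is bounded below by $1$ so that no low-frequency divergence appears. This would mesh cleanly with the Littlewood--Paley-style framework used elsewhere in the paper, but upgrading the $L^2$ Bernstein inequality of Lemma~\ref{lem:hfo1} to an $L^p\to L^q$ version is itself non-trivial and ultimately relies on the same heat-kernel ultracontractivity input, so the two routes are morally equivalent.
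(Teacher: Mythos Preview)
The paper does not actually supply a proof of this lemma; it is simply recorded as a standard fact about Sobolev spaces on $\hs$ and used freely thereafter. So there is no ``paper's own proof'' to compare against.

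That said, your outline is a correct and standard route to the result. Reducing to $L^p\to L^q$ boundedness of $(-\Delta)^{-s/2}$, writing the Riesz potential via the heat semigroup, and then running a weak-type/Marcinkiewicz argument is exactly the Varopoulos approach to Sobolev embeddings on manifolds with the right heat-kernel upper bounds; on $\hs$ the explicit heat kernel $p_t(r)=(4\pi t)^{-3/2}\frac{r}{\sinh r}e^{-t-r^2/(4t)}$ makes the needed ultracontractivity and large-time decay immediate. Two small points worth tightening: (i) when you say the $L^p\to L^q$ bounds ``follow by Riesz--Thorin interpolation from the endpoint ultracontractivity estimate,'' you of course mean interpolation between the $L^1\to L^\infty$ bound and the $L^p\to L^p$ contractivity of the semigroup, so say so explicitly; (ii) the exponential decay $\|e^{t\Delta}\|_{L^p\to L^q}\lesssim e^{-ct}$ is not a consequence of the $L^2$ spectral gap alone for general $p$, but here it drops out of the explicit kernel formula, which is the cleaner justification. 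Your alternative heat-flow Littlewood--Paley route would also work but, as you note, ultimately rests on the same kernel input.
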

    \begin{lem}[Morrey's inequality]
        If $p\geq 1,\ s>0$ and $\frac{1}{p}-\frac{s}{d}<0$, then we have the following embedding
        \begin{equation*}
            W^{s,p}\hookrightarrow L^\infty
        \end{equation*}
    \end{lem}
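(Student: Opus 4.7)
The plan is to prove this Morrey-type embedding via the subordination representation of $(-\Delta)^{-s/2}$ together with an $L^{p'}$ estimate for the hyperbolic heat kernel. Set $g=(-\Delta)^{s/2}f$, so that $g\in L^p$ with $\|g\|_p=\|f\|_{W^{s,p}}$. Using the spectral gap $\sigma(-\Delta_{\hs})\subset[1,\infty)$, the operator $(-\Delta)^{-s/2}$ makes unambiguous sense by functional calculus and admits the Bochner integral representation
\begin{equation*}
f=(-\Delta)^{-s/2}g=\frac{1}{\Gamma(s/2)}\int_0^\infty t^{s/2-1}\,e^{t\Delta}g\,dt,
\end{equation*}
so that $\|f\|_\infty\le \frac{1}{\Gamma(s/2)}\int_0^\infty t^{s/2-1}\|e^{t\Delta}g\|_\infty\,dt$.

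Next I would bound $\|e^{t\Delta}g\|_\infty$ pointwise in $t$. Because the heat kernel $p_t$ on $\hs$ depends only on geodesic distance, $e^{t\Delta}$ is a convolution operator with a bi-invariant kernel, and Young's inequality gives $\|e^{t\Delta}g\|_\infty\le \|p_t\|_{p'}\|g\|_p$. Plugging in the explicit hyperbolic kernel $p_t(r)=(4\pi t)^{-3/2}\,(r/\sinh r)\,e^{-t-r^2/(4t)}$ and integrating against the volume element $\sinh^2 r\,dr\,d\omega$, one obtains
\begin{equation*}
\|p_t\|_{p'}\lesssim t^{-\frac{3}{2p}}\ \text{for}\ 0<t\le 1,\qquad \|p_t\|_{p'}\lesssim e^{-ct}\ \text{for}\ t\ge 1.
\end{equation*}
Feeding these estimates into the subordination integral, the large-$t$ piece is harmless, while the small-$t$ piece contributes $\int_0^1 t^{s/2-1-3/(2p)}\,dt$, which converges precisely when $s/2-3/(2p)>0$, i.e.\ $\frac{1}{p}-\frac{s}{3}<0$---exactly the hypothesis $\frac{1}{p}-\frac{s}{d}<0$ with $d=3$. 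This delivers $\|f\|_\infty\lesssim\|g\|_p=\|f\|_{W^{s,p}}$.

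The main obstacle is the small-$t$ bound $\|p_t\|_{p'}\lesssim t^{-3/(2p)}$. Concretely, one splits the $r$-integration into the short-distance range $r\lesssim\sqrt{t}$, where $r/\sinh r\sim 1$ and $\sinh^2 r\sim r^2$ so the integrand behaves like the Euclidean three-dimensional Gaussian and reproduces the Euclidean scaling $t^{3/(2p')-3/2}=t^{-3/(2p)}$, and the long-distance range $r\gtrsim\sqrt{t}$, where the exponential growth of $\sinh^2 r$ must be absorbed by the Gaussian factor $e^{-r^2/(4t)}$ (this is automatic for $t\le 1$). Once this kernel bound is established, the rest---the subordination identity, Young's inequality on $\hs$, and the final $t$-integration---is routine bookkeeping.
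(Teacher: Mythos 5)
Your argument is correct, but note that the paper itself offers no proof of this lemma at all: it is recorded in the preliminaries as a standard embedding (alongside the Sobolev embedding and the Poincar\'e inequality), so there is no in-paper argument to compare against. Your heat-semigroup subordination proof is the standard self-contained route and all the key estimates check out: the identity $(-\Delta)^{-s/2}=\frac{1}{\Gamma(s/2)}\int_0^\infty t^{s/2-1}e^{t\Delta}\,dt$ is legitimate thanks to the spectral gap $\sigma(-\Delta_{\hs})\subset[1,\infty)$, the pointwise bound $\|e^{t\Delta}g\|_\infty\le\|p_t\|_{p'}\|g\|_p$ is just H\"older with the distance-dependent kernel, the small-time bound $\|p_t\|_{p'}\lesssim t^{-3/(2p)}$ follows from the explicit $\mathbb{H}^3$ kernel exactly as you describe, and the large-time exponential decay holds for every $1<p<\infty$ (for $p>2$, i.e.\ $p'<2$, the growth $e^{(2-p')r}$ of the volume factor is dominated by the Gaussian together with $e^{-p't}$, giving a net rate $c_{p'}>0$), so the $t$-integral converges precisely under $s>3/p$. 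Two small points you should make explicit to be fully rigorous: the subordination identity $f=(-\Delta)^{-s/2}(-\Delta)^{s/2}f$ should first be justified on a dense class of nice functions and then extended by density, since the $L^2$ functional calculus does not directly apply to general $f\in W^{s,p}$; and the hypothesis $1<p<\infty$ (implicit in the paper's definition of $W^{s,p}$) is genuinely used, both for $p'<\infty$ in the kernel estimate and for the duality step. Neither point is a gap in substance, only bookkeeping.
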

    
    We will need the radial Sobolev embedding, which provides stronger control than the standard Sobolev inequalities away from origin.
    \begin{lem}[Radial Sobolev Embedding]
        Suppose f is radial on $\hs$ with $d \geq 2$, and $\frac{1}{2}<\alpha<2$, we have
        \begin{equation}\label{eq radial sobolev}
            \|sinh^{\frac{d-1}{2}}rf\|_\infty\lesssim_\alpha \|f\|_{H^\alpha}
        \end{equation}
    \end{lem}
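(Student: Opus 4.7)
The plan is to reduce the three-dimensional radial inequality to a one-dimensional Sobolev embedding via the change of variables $u(r) := \sinh(r) f(r)$, exploiting that on $\mathbb{H}^3$ the radial Laplacian becomes, under this substitution, the half-line Dirichlet operator $1 - \partial_r^2$ with no residual potential. The first step is to verify, using the polar expression $\Delta_{\mathbb{H}^3} f = f'' + 2\coth(r) f'$ for radial $f$, the conjugation identity $\sinh(r)\, \Delta_{\mathbb{H}^3} f = (\partial_r^2 - 1) u$ by direct computation.

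Combined with the volume-element identity $\|f\|_{L^2(\mathbb{H}^3)}^2 = 4\pi \int_0^\infty |u(r)|^2 \, dr$, this conjugation shows that $f \mapsto u$ is, up to a constant, a unitary equivalence from radial $L^2(\mathbb{H}^3)$ onto $L^2(\mathbb{R}_+, dr)$ intertwining $-\Delta_{\mathbb{H}^3}$ with $H := 1 - \partial_r^2$ (Dirichlet at the origin, since $u(0) = 0$). By the spectral theorem the Sobolev norms transfer, $\|f\|_{H^\alpha(\mathbb{H}^3)} \sim \|H^{\alpha/2} u\|_{L^2(\mathbb{R}_+)}$ for every $\alpha \geq 0$, where the Poincar\'e inequality stated in the preliminaries absorbs the low-order $L^2$ term on the hyperbolic side.

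The final step is the one-dimensional Sobolev embedding $\|u\|_{L^\infty(\mathbb{R}_+)} \lesssim_\alpha \|H^{\alpha/2} u\|_{L^2(\mathbb{R}_+)}$ for $\alpha > 1/2$. For $\alpha < 3/2$ this follows by extending $u$ by odd reflection across the origin and invoking $H^\alpha(\mathbb{R}) \hookrightarrow L^\infty(\mathbb{R})$; for $\alpha \in [3/2, 2)$ the additional vanishing $u''(0) = 2 f'(0) = 0$, automatic for smooth radial $f$, keeps the odd extension in $H^\alpha(\mathbb{R})$ and allows the estimate to be closed by density. Chaining the three steps yields $\|\sinh(r) f\|_{L^\infty} = \|u\|_{L^\infty(\mathbb{R}_+)} \lesssim_\alpha \|f\|_{H^\alpha(\mathbb{H}^3)}$, the claimed inequality. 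The main obstacle is the fractional Sobolev transfer through the odd extension at $\alpha$ near $3/2$, where boundary regularity of $u$ must be tracked carefully; the conjugation identity itself and the underlying $L^2$ isometry are elementary calculations. An alternative route more in line with the heat-flow apparatus of Section~2 would be to first establish the base case $\alpha = 1$ by the fundamental theorem of calculus and Cauchy--Schwarz against the radial measure $\sinh^2(\rho)d\rho$, which yields $|f(r)|^2 \lesssim \|\nabla f\|_2^2 \cdot e^{-r}/\sinh(r)$ and hence $\|\sinh(r)f\|_\infty \lesssim \|f\|_{H^1}$ after noting that $\sinh(r)e^{-r}$ is uniformly bounded, then extend to $\alpha \geq 1$ by Poincar\'e and to $\alpha \in (1/2, 1)$ by a Littlewood--Paley-type split $f = P_{\geq 1}f + P_{<1}f$ using Lemma~2.5.
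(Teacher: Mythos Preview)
Your main route via the conjugation $u=\sinh(r)f$ is correct and genuinely different from the paper's argument. The paper instead establishes the base inequality $|\sinh(r)f(r)|^2\lesssim \|f\|_2\|\nabla f\|_2$ by the fundamental theorem of calculus and Cauchy--Schwarz against the radial measure---precisely the ``alternative route'' you sketch at the end---recasts it as the scale-$s$ bound $\|\sinh(r)f\|_\infty\lesssim s^{-1/4}(\|f\|_2+\|(-s\Delta)^{1/2}f\|_2)$, and then defers to the heat-flow interpolation of \cite{staffilani2020high} (their Corollary~2.22) to reach general $\alpha\in(\tfrac12,2)$. Your conjugation argument is more structural: it exploits the special feature of $\mathbb{H}^3$ that the radial Laplacian becomes exactly $\partial_r^2-1$ with no residual potential, so one obtains the full range $\alpha>\tfrac12$ in one stroke from the one-dimensional embedding, bypassing the heat-flow machinery entirely. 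The paper's approach, in exchange, transports to other dimensions $\mathbb{H}^d$ and to settings where no potential-free conjugation is available, and it stays within the Littlewood--Paley substitute framework used throughout Section~2. One minor remark: your concern about the odd extension near $\alpha=\tfrac32$ is slightly overstated, since the odd-extension map is a unitary intertwining the Dirichlet operator $1-\partial_r^2$ on $\mathbb{R}_+$ with the full-line operator on odd functions, so the functional calculus transfers the fractional powers automatically and no separate boundary-regularity tracking is needed.
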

   See \cite{staffilani2020high} for proof on $\mathbb{H}^2$. Proof for higher dimension requires little modification.

\subsection{Strichartz Estimates}

In this subsection, we will prove the Strichartz estimate for radial solution to the equation on $\hs$:
\begin{equation}\label{eq for Strichartz}
    \begin{aligned}
        &u_{tt}-\Delta_{\hs} u = F\\
        &u(0,r)=u_0(r),\ \ u_t(0)=u_1(r)
    \end{aligned}
\end{equation}

Denote $\rho=\frac{d-1}{2}$. We can convert (\ref{eq for Strichartz}) to a (perturbed) Klein Gordan equation on $\mathbb{R}^d$ using a change of coordinate trick:

\begin{lem}\label{Lemma : Hd to Rd}
    Suppose $u_0, u_1$ and $F$ are radial, then(\ref{eq for Strichartz}) is equivalent to the following initial value problem on $\mathbb{R}^d$:
    \begin{equation}\label{eq for Rd}
        \begin{aligned}
            &v_{tt}-\Delta_{\mathbb{R}^d}v + \rho^2 v + V(r)v = \phi^{-1}(r)F\\
            &v(0,r)=\phi^{-1}(r)u_0(r),\ \ v_t(0)=\phi^{-1}(r)v_1(r)
        \end{aligned}
    \end{equation}
    where $V(r)=-\frac{(d-1)(d-3)}{4}(\frac{1}{r^2}-\frac{1}{sinh^2r})$
\end{lem}

\begin{proof}
    Set 
    \begin{equation*}
        u(t,r)=\phi(r)v(t,r)
    \end{equation*}
    where $\phi(r)=(rsinh^{-1}r)^\rho$ and plug into (\ref{eq for Strichartz}), we get (\ref{eq for Rd}).

\end{proof}

In light of lemma above, showing Strichartz estimates for wave evolution on $\hs$ is equivalent to showing the Strichartz estimates for the perturbed Klein Gordan flow on $\mathbb{R}^d$. In fact, for standard Klein Gordan flow on $\mathbb{R}^d$, we have the following definition of admissibility and Strichartz estimates.
\begin{defn}[Admissible Pairs for Standard Klein Gordan Flow]
    For $d\geq 3$, $p,q\geq 2$, $0\leq\theta\leq 1$ and $\mu\in \mathbb{R}$, we say a pair $(p,q,d,\theta,\mu)$ is admissible iff
    \begin{equation}
        \begin{aligned}
            &\frac{2}{p}+\frac{d-1+\theta}{q}\leq \frac{d-1+\theta}{2}, \ \ (p,q,d,\theta)\neq (2,\infty,3,0)\\
            &\frac{1}{p}+\frac{d+\theta}{q}=\frac{d+\theta}{2}-\mu\\
        \end{aligned}
    \end{equation}
\end{defn}

\begin{lem}[Strichartz Estimates for Standard Klein Gordan Flow on $\mathbb{R}^d$, see \cite{changxing2004global}]\label{Lemma Strichartz Standard}
    Suppose u(t,x) solves the Klein Gordan equation on $I\times \mathbb{R}^d$
    \begin{equation*}
        \begin{aligned}
            &u_{tt}-\Delta_{\mathbb{R}^d}u+\rho^2 u=F\\
            &u(0)=u_0,\ \ u_t(0)=u_1
        \end{aligned}
    \end{equation*}
    $(p,q,d,\theta,\mu)\ and \ (\Tilde{p},\Tilde{q},d,\theta,1-\mu)$ are admissible, then we have
    \begin{equation}
        \|u\|_{L^p_tL^q_x}+\|(u,u_t)\|_{C^0_tH^{\mu}\times H^{\mu-1}}\lesssim \|(u_0,u_1)\|_{H^\mu\times H^{\mu-1}} + \|F\|_{L^{\tilde{p}'}_tL^{\tilde{q}'}_x}
    \end{equation}    
\end{lem}

In order to prove the Strichartz estimate, we will use the Rodnianski-Schlag technique, which allows us to bypass the dispersive estimates for perturbed equation and derive Strichartz estimates from Kato smoothing estimates of the following type.

\begin{lem}[Kato Smoothing Estimate from \cite{d2015kato}]
    Let V(r) be as in Lemma \ref{Lemma : Hd to Rd}, then we have
    \begin{equation}\label{Kato smoothing}
        \||x|^{-1}e^{it\sqrt{-\Delta+1+V}}f\|_{L^2_{t,x}}\lesssim \|f\|_{H^\frac{1}{2}}
    \end{equation}
\end{lem}
\begin{proof}
    By \cite{d2015kato}, (\ref{Kato smoothing}) is true if the following condition on V is satisfied:\\
    Let $c=\frac{(d-2)^2}{4}$, there exists $C>0$, so that
    \begin{equation}
        \begin{aligned}
            &\frac{C}{|x|^2}\geq V(x) \geq -\frac{c}{|x|^2}\\ 
            &-\partial_r(|x|V(x))\geq -\frac{c}{|x|^2}
        \end{aligned}
    \end{equation}
    This is easily verified.
\end{proof}

Now, we follow the strategy of \cite{rodnianski2001time} to get the folowing estimates. For the convenience of the reader, we write the proof in the following proposition:
\begin{prop}[Strichartz Estimates for Perturbed Klein Gordan Flow on $\mathbb{R}^d$]
    Suppose u(t,x) solves (\ref{eq for Strichartz}) on $I\times \mathbb{R}^d$ with $(u_0,u_1)\in H^\frac{1}{2}\times H^{-\frac{1}{2}}$ and are radially symmetric, For admissible pairs $(p,q,d,\theta,\frac{1}{2})$ and $(\tilde{p},\Tilde{q},d,\theta,\frac{1}{2})$, we have
    \begin{equation}
        \|u\|_{L^p_tL^q_x} + \|(u,u_t)\|_{C^0_tH^\frac{1}{2}\times H^{-\frac{1}{2}}} \lesssim \|(u_0,u_1)\|_{H^\frac{1}{2}\times H^{-\frac{1}{2}}} + \|F\|_{L^{\Tilde{p}'}_tL^{\Tilde{q}'}_x}
    \end{equation}
\end{prop}

\begin{proof}
    Let $H_0=\begin{bmatrix}
        0 & -1\\
        \Delta & 0
    \end{bmatrix}$
    and $H=\begin{bmatrix}
        0 & -1\\
        \Delta-1+V & 0
    \end{bmatrix}$.\\
    Note that $\begin{bmatrix}
        f\\
        g
    \end{bmatrix}\rightarrow \begin{bmatrix}
        u\\
        u_t
    \end{bmatrix}=e^{tH_0}\begin{bmatrix}
        f\\
        g
    \end{bmatrix}$ is the linear Klein Gordan without perturbation, thus satisfies
    \begin{equation}
        \|I_1e^{tH_0}\begin{bmatrix}
            f\\
            g
        \end{bmatrix}\|_{L^p_tL^q_x} \lesssim \|(f,g)\|_{H^\frac{1}{2}\times H^{-\frac{1}{2}}}
    \end{equation}
    for admissible pair $(p,q,d,\theta,\frac{1}{2})$ and $I_1=\begin{bmatrix}
            1 & 0
        \end{bmatrix}$.\\
    Denote $A=\begin{bmatrix}
        0 & 0\\
        \sqrt{|V|} & 0 
    \end{bmatrix}$ and $B=\begin{bmatrix}
        -\sqrt{|V|} & 0\\
        0 & 0
    \end{bmatrix}$. Thus $H-H_0=AB$
    Duhamel's formula yields
    \begin{equation}\label{Duhamel operator}
        e^{tH}\begin{bmatrix}
            f\\
            g
        \end{bmatrix}=e^{tH_0}\begin{bmatrix}
            f\\
            g
        \end{bmatrix} - \int_0^t e^{(t-s)H_0}ABe^{isH}\begin{bmatrix}
            f\\
            g
        \end{bmatrix}ds
    \end{equation}
    To prove that $\|I_1e^{tH}\begin{bmatrix}
        f\\
        g
    \end{bmatrix}\|_{L^p_tL^q_x}\lesssim \|(f,g)\|_{H^\frac{1}{2}\times H^{-\frac{1}{2}}}$, the first term in (\ref{Duhamel operator}) automatically satisfies the estimate due to unperturbed Strichartz estimate. It only remains to prove the second term also maps $(f,g)\in H^\frac{1}{2}\times H^{-\frac{1}{2}}$ to $L^p_tL^q_x$. In fact, if we write the second term in the following form
    \begin{equation*}
        \int_0^t e^{(t-s)H_0}AB\begin{bmatrix}
            f\\
            g
        \end{bmatrix}ds=T_1 T_2 \begin{bmatrix}
            f\\
            g
        \end{bmatrix}
    \end{equation*}
    where 
    \begin{equation}
        T_2\begin{bmatrix}
            f\\
            g
        \end{bmatrix}=\begin{bmatrix}
            -\sqrt{|V|} & 0\\
            0 & 0
        \end{bmatrix}e^{isH}\begin{bmatrix}
            f\\
            g
        \end{bmatrix}=\begin{bmatrix}
            -\sqrt{|V|}\left(cos\sqrt{-\Delta+\rho^2+V}f + \frac{sin\sqrt{-\Delta+\rho^2+V}}{\sqrt{-\Delta+\rho^2+V}}g\right)\\
            0
        \end{bmatrix}
    \end{equation}
    and
    \begin{equation}
        T_1\begin{bmatrix}
            F\\
            G
        \end{bmatrix} = I_1\int_0^t e^{(t-s)H_0}\begin{bmatrix}
            0 & 0\\
            \sqrt{|V|} & 0
        \end{bmatrix}\begin{bmatrix}
            F\\
            G
        \end{bmatrix}ds
    \end{equation}

    By (\ref{Kato smoothing}), we have $T_2: H^\frac{1}{2}\times H^{-\frac{1}{2}} \rightarrow L^2_{t,x}\times 0$. Thus it suffices to prove that $T_1: L^2_{t,x}\times 0 \rightarrow L^p_tL^q_x$
    By Christ Kiselev Lemma, it suffices to show the mapping property for 
    \begin{equation}\label{Strichartz T_1}
        \Tilde{T_1}=I_1e^{tH_0}\int_0^\infty e^{-sH_0}\begin{bmatrix}
            0 & 0\\
            \sqrt{|V|} & 0
        \end{bmatrix}ds
    \end{equation}
    By Lemma \ref{Lemma Strichartz Standard}, $I_1e^{tH_0}$ maps $H^\frac{1}{2}\times H^{-\frac{1}{2}}$ to $L^p_tL^q_x$. Thus in order to prove (\ref{Strichartz T_1}), it suffices to show that $T_3=\int_0^\infty e^{-sH_0}\begin{bmatrix}
            0 & 0\\
            \sqrt{|V|} & 0
        \end{bmatrix}ds$ maps $L^2_{t,x}\times 0$ to $H^\frac{1}{2}\times H^{-\frac{1}{2}}$. This can be seen by duality: given any $(\phi,\psi)\in H^{-\frac{1}{2}}\times H^\frac{1}{2}$, we have

    \begin{equation}
        \begin{aligned}
            <T_3\begin{bmatrix}
                F\\
                G
            \end{bmatrix},\begin{bmatrix}
                \phi\\
                \psi
            \end{bmatrix}>_{L^2_{x}}&= \int_0^\infty F(s)\cdot \sqrt{|V|}\left( \frac{sin\sqrt{-\Delta+\rho^2}}{\sqrt{-\Delta+\rho^2}}\phi + cos\sqrt{-\Delta+\rho^2}\psi \right)\\
            &\leq \|F\|_{L^2_{t,x}} \|\sqrt{|V|}\left( \frac{sins\sqrt{-\Delta+\rho^2}}{\sqrt{-\Delta+\rho^2}}\phi + coss\sqrt{-\Delta+\rho^2}\psi \right)\|_{L^2_{t,x}}\\
            &\leq \|F\|_{L^2_{t,x}}\|(\phi,\psi)\|_{H^{-\frac{1}{2}}\times H^\frac{1}{2}}
        \end{aligned}
    \end{equation}

    This proves
    \begin{equation}
        \|e^{t\sqrt{-\Delta+\rho^2+V}}f\|_{L^p_tL^q_x} \lesssim \|f\|_{H^\frac{1}{2}}
    \end{equation}
    The nonlinear estimate follows from duality. This completes our proof of the perturbed Strichartz estimate on $\mathbb{R}^d$.

\end{proof}

Now, we revert the coordinates back to $\hs$ to get the Strichartz estimate on $\hs$
\begin{cor}[Strichartz Estimates for Wave Flow on $\hs$]
    Suppose u(t,x) solves (\ref{eq for Strichartz}) on $I\times\hs$ with $(u_0,u_1)\in H^\frac{1}{2}\times H^{-\frac{1}{2}}$ and are radially symmetric. Suppose $(p,q,d,\theta,\frac{1}{2})$ and $(\Tilde{p},\Tilde{q},d,\theta,\frac{1}{2})$ are admissible in the same sense as on $\mathbb{R}^d$, then we have
    \begin{equation}
        \|w_q u\|_{L^p_tL^q_x} + \|u\|_{C^0_tH^\frac{1}{2}\times H^{-\frac{1}{2}}} \lesssim \|(u_0,u_1)\|_{H^\frac{1}{2}\times H^{-\frac{1}{2}}} + \|w_{\Tilde{q}'}F\|_{L^{\Tilde{p}'}_tL^{\Tilde{q}'}_x}
    \end{equation}
    where
    \begin{equation}
        w_q=\left(\frac{sinhr}{r}\right)^{(1-\frac{2}{q})\rho}
    \end{equation}
\end{cor}

Global wellposedness and scattering of the solution to (\ref{eq:nlw}) for small data follows from Strichartz estimates.
\begin{cor}[Small Data Result for Conformal Wave Equation]
    There exists $\delta_0>0$, such that the solution to (\ref{eq:nlw}) is globally wellposed and scatters if $\|(u_0,u_1)\|_{H^\frac{1}{2}\times H^{-\frac{1}{2}}}\leq \delta_0$.
\end{cor}

\section{Morawetz Estimates}
In this section, we prove several Morawetz inequalities derived from choosing different potentials. Given smooth function $a(x)\in C^\infty(\mathbb{H}^d)$, let u be a radial solution to the modified NLW (\ref{nlw error}) with error term $\mathcal{N}$
\begin{equation}\label{nlw error}
    u_{tt}-\Delta_{\mathbb{H}^d}u+u^p=\mathcal{N}
\end{equation}
For suitably defined functions $\phi, a \in C^\infty(\hs-\{0\})$ on $\hs$, define the Morawetz potential to be
\begin{equation}\label{eq MP}
    M(t)=-\int_{\hs}\phi\left(u_t\nabla u\cdot\nabla a+u_tu\frac{\Delta a}{2}\right)
\end{equation}

In order to derive control from the Morawetz potential, we will have to impose certain restrictions on the choices of its component functions. 
\begin{asp}\label{asp a}
    $a \in C^\infty (\hs - \{0\})$ is radially symmetric and satisfies
    \begin{equation}\label{asp a}
        |\nabla a| \leq C, \hspace{2em}
        \Delta a\geq 0, \hspace{2em} \Delta^2 a\leq 0
    \end{equation}
    and $D^2 a \geq 0 $ for any tangent vector v along radial direction.
\end{asp}
and 
\begin{asp}\label{asp phi}
    $\phi \in C^{\infty}(\hs-\{0\})$ is radially symmetric and satisfies
    \begin{equation} \label{asp phi 1}
        \left\{\begin{aligned}
            & \partial_r \phi \geq 0 \\
            & |\nabla \phi| \leq \frac{C}{r}, \hspace{2em} |\Delta \phi| \leq \frac{C}{r^2}
        \end{aligned}\right.
    \end{equation}
\end{asp}
We first compute the time derivative of the Morawetz potential.
\begin{lem}[Generalized Morawetz Potential]
    Suppose u is a radial symmetric solution to (\ref{nlw error}) on the time interval I, then 
    \begin{equation}\label{eq MP dM/dt}
        \begin{aligned}
             \frac{dM}{dt}&=\int_{\hs}a^{''}(r)|\nabla u|^2 + \frac{1}{2} \int (\nabla \phi \cdot \nabla a) |\nabla u|^2 + \frac{1}{2}(\nabla \phi \cdot \nabla a) u_t^2 \\
                         &+ \int_{\hs}(\frac{1}{2}-\frac{1}{p+1}) \phi \Delta a u^{p+1} - \frac{1}{p+1} (\nabla \phi \cdot \nabla a) u^{p+1} \\
                         & - \int_{\hs}\frac{1}{4}(\phi \Delta^2 a + 2\nabla \phi \cdot \nabla \Delta a + \Delta \phi \Delta a) u^2\\
                         &+ \int_{\hs}\phi\mathcal{N}(\nabla a \cdot \nabla u) + \frac{1}{2}\phi\mathcal{N}u\Delta a
        \end{aligned}
    \end{equation}
\end{lem}

\begin{proof}
    We prove (1) for $u\in C^\infty(\hs) \cap H^1(\hs)$ and $u_t \in C^\infty(\hs) \cap L^2(\hs)$. The general inequality case follows from a density argument.
    Differentiating $M(t)$ in t, we obtain
    \begin{equation}\label{eq dM/dt 1}
        \begin{aligned}
            \frac{dM}{dt} & = -\int_\hs \phi (u_{tt} \nabla u \cdot \nabla a + u_t \nabla u_t \cdot \nabla a + \frac{1}{2}u_{tt} u \Delta a + \frac{1}{2} u_t^2 \Delta a ) \\
            & = I + II + III + IV
        \end{aligned}
    \end{equation}
    where I~IV correspond to the four terms in (\ref{eq dM/dt 1}) in order.
    Using (\ref{nlw error}) and integrating in parts, we get
    \begin{equation}\label{eq MP I1}
        \begin{aligned}
            I_1 & := -\int \phi \Delta u \nabla u \cdot \nabla a \\
            & = \int \phi D^2 a (\nabla u, \nabla u) + \frac{1}{2}\int \phi \nabla a \cdot \nabla |\nabla u|^2 + \int (\nabla (\psi_\epsilon \phi) \cdot \nabla u)(\nabla a \cdot \nabla u) \\
            & = \int \phi D^2 a (\nabla u, \nabla u) - \frac{1}{2}\int \phi \Delta a |\nabla u|^2 + \frac{1}{2}\int (\nabla \phi \cdot \nabla a)|\nabla u|^2 + \frac{1}{2}\int \phi (\nabla \cdot \nabla a)|\nabla u|^2 
        \end{aligned}
    \end{equation}
    and 
    \begin{equation}
        \begin{aligned}
            I_2 & := \int \phi u^p \nabla u \cdot \nabla a \\
            & = -\frac{1}{p+1}\int \phi u^{p+1} \Delta a - \frac{1}{p+1}\int (\nabla \phi \cdot \nabla a) u^{p+1}
        \end{aligned}
    \end{equation}
    \begin{equation}
        II = \frac{1}{2}\int \Delta a u_t^2 + \frac{1}{2} \int (\nabla \phi \cdot \nabla a) u_t^2 
    \end{equation}
    \begin{equation}
        \begin{aligned}
            III_1 & := -\frac{1}{2}\int \phi u\Delta u  \Delta a \\
            & = \frac{1}{2}\int \nabla \phi \cdot u\nabla u \Delta a + \frac{1}{2} \int \phi |\nabla u|^2 \Delta a + \frac{1}{2} \int \phi u\nabla u \cdot \nabla \Delta a \\
            & = -\frac{1}{4} \int ( \Delta \phi \Delta a + 2 \nabla \phi \cdot \nabla \Delta a +  \phi \Delta^2 a ) u^2 + \frac{1}{2} \int \phi \Delta a |\nabla u|^2
        \end{aligned}
    \end{equation}
    \begin{equation}
        III_2 = \frac{1}{2} \int \phi u^{p+1} \Delta a  
    \end{equation}
    \begin{equation}\label{eq MP IV}
        IV = - \frac{1}{2} \int \phi \Delta a u_t^2 
    \end{equation}
    Collecting (\ref{eq MP I1})-(\ref{eq MP IV}), we have
    \begin{equation}
        \begin{aligned}
            \frac{dM}{dt} & = \int \phi D^2 a (\nabla u, \nabla u) + \frac{1}{2} \int (\nabla \phi \cdot \nabla a) |\nabla u|^2 + \frac{1}{2} \int (\nabla \phi \cdot \nabla a) u_t^2 \\
            & + (\frac{1}{2} - \frac{1}{p+1}) \int \phi \Delta a u^{p+1} 
            - \frac{1}{4} \int ( \phi \Delta^2 a + 2\nabla \phi \cdot \nabla \Delta a + \Delta \phi ) u^2 \\
            & - \int \phi \mathcal{N} \nabla u \cdot \nabla a - \frac{1}{2} \int \phi \Delta a \mathcal{N} u 
        \end{aligned}
    \end{equation}
    Since u is radial, we have
    \begin{equation*}
        D^2 a (\nabla u, \nabla u) = \partial_{r}^2 a(r) |\nabla u|^2
    \end{equation*}
    This proves the lemma.
\end{proof}

Now we choose suitable functions $a_j$ and $\phi_j \in C^\infty(\hs - \{0\})$ which yields useful spacetime bounds.

\begin{lem}\label{M1}
    Given $0 < \delta \leq 1$, there exists function $a \in C^\infty(\hs-\{0\})$ and positive constants $C = C(d)$ and $R = R(d)$ such that
        \begin{equation}\label{eq MP a 1}
            \Delta a = \left\{\begin{aligned}
                & \frac{1}{r^{1-\delta}}, \hspace{2em} 0 < r \leq 1 \\
                & 1, \hspace{2em} r > 1
            \end{aligned}\right.
            \hspace{2em} , \hspace{2em}
            |\nabla a| \leq \left\{\begin{aligned}
                & C r^{\delta}, \hspace{2em} 0 < r \leq R \\
                & C, \hspace{2em} r > R 
            \end{aligned}\right.
        \end{equation}
        and
        \begin{equation}\label{eq MP a 2}
            \partial_{r}^2 a(r) \geq \left\{\begin{aligned}
                & \frac{1}{C}r^{-(1-\delta)}, \hspace{2em} 0 < r \leq R \\
                & \frac{1}{C}e^{-2r}, \hspace{2em} r > R
            \end{aligned}\right.
        \end{equation}
\end{lem}

\begin{proof}
    The case where $\delta = 1$ has been proved in \cite{ionescu2009semilinear} except for the estimate (\ref{eq MP a 2}). For the readers' convenience, we recall that $a_1$ is obtained by solving the following partial differential equation
    \begin{equation}\label{eq a1 def}
        \partial_{r}^2 a_1 + \frac{(d-1)cosh r}{sinh r}\partial_r a_1 = 1
    \end{equation}
    which yields
    \begin{equation}
        \partial_r a_1 = \frac{1}{sinh^{d-1}r} \int_0^r sinh^{d-1}s ds
    \end{equation}
    $a_1$ is obtained by integrating again with the initial value $a_1(0) = 0$. $a_1$ has been shown to satisfy Assumption \ref{asp a}. See \cite{ionescu2009semilinear} Lemma 4.2 for details. Now we prove the lower bound (\ref{eq MP a 2}), which was not shown by the referenced work. Using $\Delta a = 1$, we have
    \begin{equation}
        \partial_{r}^2 a = 1 - \frac{(d-1)coshr}{sinhr} \cdot \frac{1}{sinh^{d-1}r} \int_0^r sinh^{d-1}s ds 
    \end{equation}
    Denote $I_d(r) = \partial_{r}^2 a^{(d)}(r)$ where $a^{(d)}$ is defined by (\ref{eq a1 def}) in dimension d. Using the inductive equation
    \begin{equation*}
        \int_0^r sinh^d s ds = -\frac{d-2}{d-1} \int_0^r sinh^{d-2} s ds + \frac{1}{d-1} sinh^{d-2} r cosh r
    \end{equation*}
    , we get
    \begin{equation}
        I_d(r) = -\frac{d-2}{d-3} \cdot \frac{1}{sinh^{2}r} I_{d-2}(r) + \frac{1}{d-3} \cdot \frac{1}{sinh^2 r}
    \end{equation}
    Also, we compute $I_3(r)$ and $I_4(r)$ explicitly as below:
    \begin{equation}
        \begin{aligned}
            & I_3(r) = \frac{rcosh r - sinh r}{sinh^3 r} \\
            & I_4(r) = \frac{(cosh r - 1)^2}{sinh^4 r} 
        \end{aligned}
    \end{equation}
    By induction, we can show that the Taylor expansion of $I_d(r)$ at $r=0$ writes as
    \begin{equation}
        I_d(r) = \frac{1}{d} + \frac{d-1}{d(d+2)} r^2 + o(r^2)
    \end{equation}
    and that 
    \begin{equation}
        I_d(r) \geq \frac{1}{C}e^{-2r} 
    \end{equation}
    for some $C=C(d)>0$ as $r \rightarrow +\infty$.
    This proves the latter inequality of (\ref{eq MP a 2}).

    For $0 < \delta < 1$, a is defined by
    \begin{equation}\label{eq a2 def}
        \partial_{r}^2 a + \frac{(d-1)cosh r}{sinh r}\partial_r a = \left\{\begin{aligned}
            & \frac{1}{r^{1-\delta}}, \hspace{2em} 0 < r \leq 1 \\
            & 1, \hspace{2em} r > 1
        \end{aligned}\right.
    \end{equation}
    which yields
    \begin{equation}
        \partial_r a = \frac{1}{sinh^{d-1}r} \int_0^r \frac{sinh^{d-1}s}{s^{1-\delta}} ds
    \end{equation}
    for $0 < r \leq R$. (\ref{eq MP a 1}) follows directly. To see (\ref{eq MP a 2}), note that
    \begin{equation}
        \begin{aligned}
            \partial_{r}^2 a = \frac{1}{r^{1-\delta}} - \frac{(d-1)coshr}{sinhr} \cdot \frac{1}{sinh^{d-1}r} \int_0^r \frac{sinh^{d-1}s}{s^{1-\delta}} ds
        \end{aligned}
    \end{equation}
    Using Taylor expansion near $r = 0$, we get
    \begin{equation}\label{eq a2 Taylor}
        \partial_{r}^2 a = \frac{\delta}{d-1+\delta} \cdot r^{-(1-\delta)} + o(r^{-(1-\delta)})
    \end{equation}
    This proves the $r < R$ part of (\ref{eq MP a 2}) for $0 < \delta < 1$. The $r > R$ part can be obtained by the same argument for $\delta = 1$ with a slight modification. This concludes the proof of the lemma.
   
\end{proof}

Define $a_1, a_2 \in C^\infty(\hs-\{0\})$ as in the preceding lemma with $\delta = 1$ and some small $\delta > 0$ respectively. Define Morawetz potential $M_1$ and $M_2$ accordingly by (\ref{eq MP}). We have the following lower bound for their growth rate.

\begin{cor}\label{cor MP 1,2}
    There exists constants $C = C(d)>0$ and $R = R(d)>0$ such that the following estimates holds:
        \begin{equation}\label{MP1}
            \frac{dM_1}{dt} \geq \frac{1}{C}\|e^{-r}\nabla u\|_2^2 + \frac{1}{C}\|u\|_{p+1}^{p+1} - C\|\mathcal{N}\nabla u\|_1 - C\|\mathcal{N}u\|_1
        \end{equation}
    and
        \begin{equation}\label{MP2}
            \begin{aligned}
                \frac{dM_2}{dt} & \geq \frac{1}{C}\|r^{\frac{-(1-\delta)}{2}}\nabla u\|_{L^2_x(r<R)}^2 
                + \frac{1}{C}\|r^{-\frac{1}{p+1}(1-\delta) } u\|_{L^{p+1}_x(r<R)}^{p+1} + \frac{1}{C}\|r^{\frac{-(3-\delta)}{2}}u\|_{L^2_x(r<R)}^2 \\
                & - C\|\mathcal{N}\nabla u\|_1 - C\|\frac{\mathcal{N}u}{r^{1-\delta}}\|_1 - C\|\mathcal{N}u\|_1
            \end{aligned}
        \end{equation}
\end{cor}

\begin{proof}
    These follow directly from the preceding lemma and (\ref{eq MP dM/dt}).
\end{proof}

Note that integrating (\ref{MP1}) and (\ref{MP2}) gives us the familiar Morawetz estimate of the form
\begin{equation*}
    \|e^{-r} \nabla u\|_{L^2_{t,x}}^2 + \|u\|_{p+1}^{p+1} \lesssim \sup_{t\in I} |M_1(t)| + \|\mathcal{N}\nabla u\|_1 + \|\mathcal{N}u\|_1
\end{equation*}
However, neither (\ref{MP1}) and (\ref{MP2}) gives us any control on $v_t$. This is because when $\phi = 1$, the $v_t$ terms in (\ref{eq MP dM/dt}) cancels. To counter that, we define the following two modified Morawetz potential, by introducing non-constant potential functions $\phi$. The purpose of $M_3$ is to yield control for $v_t$ away from origin and $M_4$ for near origin.

Let $\phi_3$ be defined by
\begin{equation}
    \phi_3(r) = 1 - e^{-2r}
\end{equation}
and $\phi_4$ by
\begin{equation}
    \phi_4(r) = log(r)\chi(r)
\end{equation}
where $\chi(r)$ is a smooth bump function supported in $B(0, 1)$ and equal to 1 in $B(0, \frac{1}{2})$.
Define modified Morawetz potential $M_3$ by (\ref{eq MP}) with $a_1$ and $\phi_3$. And similarly $M_4$ is defined with $a_2$ and $\phi_4$. We have the following lower bound for their growth rate.

\begin{cor}\label{cor MP 3,4}
    There exists positive constants $C = C(d)$ and $R=R(d)$ such that
    \begin{equation}
        \begin{aligned}
            \frac{dM_3}{dt} \geq \frac{1}{C}\|e^{-r}v_t\|_{L^2_x(r>R)}^2 - C\|u\|_{p+1}^{p+1} - C\|q(x)u\|_2^2 - C\|\mathcal{N}\nabla u\|_1 - C\|\mathcal{N}u\|_1
        \end{aligned}
    \end{equation}
    where 
    \begin{equation*}
        q(x)=\left\{\begin{aligned}
        &r^{-\frac{1}{2}},\ \ r\leq R\\
        &e^{-r}, \ \ r\geq R
    \end{aligned}\right.
    \end{equation*}.
    For $M_4$, we have that for some $0 < \delta' < \delta$:
    \begin{equation}
        \begin{aligned}
            \frac{dM_4}{dt} & \geq \frac{1}{C}\|r^{-\frac{1-\delta}{2}} u_t\|_{L^2_x(r<R)}^2 - C\|r^{-\frac{1-\delta}{2}}\nabla u\|_{L^2_x(r<R)}^2 - C\|r^{-\frac{1}{p+1}(1-\delta)}u\|_{L^{p+1}_x(r<R)}^{p+1} \\
            & - C\|r^{-\frac{3-\delta'}{2}}u\|_{L^2_x(r<R)}^2 - C\|\mathcal{N}\nabla u\|_1 - C\|\mathcal{N}\frac{u}{r}\|_1
        \end{aligned}
    \end{equation}
\end{cor}

\begin{proof}
    The positive $u_t$ term comes from $\int (\nabla a \cdot \nabla \phi) u_t^2$ in (\ref{eq MP dM/dt}). Note that $\nabla a \cdot \nabla \phi > 0$ in $M_3$ and $M_4$ which guarantees positiveness of the $u_t$ term. In addition, we have that for fixed $0 < \delta' < \delta$, the following inequality holds for $r \leq 1$ for some $C>0$:
    \begin{equation*}
        | logr \cdot r^{\delta} | \leq C r^{\delta'}
    \end{equation*}
    Thus we have
    \begin{equation}
        \begin{aligned}
            & | \nabla \phi \cdot \nabla a | \leq C r^{-(1-\delta)} \\
            & |\phi \Delta^2 a| \leq C r^{-(3-\delta')}
        \end{aligned}
    \end{equation}
    Plugging these into (\ref{eq MP dM/dt}) proves the estimates. This ensures introducing $\phi$ to control $u_t^2$ does not break our control over other terms like $|\nabla u|^2$ and $u^2$. 
\end{proof}

\section{Global Well-posedness and Scattering when $3\leq d \leq 5$}
Since the initial data (\ref{IVP}) lies below $H^1$, we are left with no energy or other known conserved quantity that controls $H^\frac{1}{2} \times H^{-\frac{1}{2}}$ norm. To counter this difficulty, we smooth up the initial data and evolve it under a perturbed NLW equation in a way such that the global wellposedness of the original solution u is equivalent to that of the perturbed solution. 
In the following Proposition, we state the mechanism of Fourier truncation method.
\begin{prop}
    Given initial value problem (\ref{eq:nlw}),(\ref{IVP}), we split the initial data into a high frequency small piece and a smoother large piece, in the following sense: for some $s>0$,
    \begin{equation}
        (u_0,u_1)=(\omega_0,\omega_1)+(v_0,v_1)
    \end{equation}
    where $(\omega_0,\omega_1)$ is the high frequency piece defined by
    \begin{equation}
        (\omega_0,\omega_1)=(P_{<s}u_0,P_{<s}u_1)
    \end{equation}
    and $(v_0,v_1)$ is the low frequency piece defined by
    \begin{equation}
        (v_0,v_1)=(P_{\geq s}u_0,P_{\geq s}u_1)
    \end{equation}
    Moreover, for any $0<\epsilon_1<\epsilon$, v and $\omega$ satisfy
    \begin{equation}\label{IV truc}
        \begin{aligned}
            &\|(\omega_0,\omega_1)\|_{H^{\frac{1}{2}+\epsilon_1} \times H^{-\frac{1}{2}+\epsilon_1}} \lesssim s^{\frac{1}{2}(\epsilon-\epsilon_1)}\|(u_0,u_1)\|_{H^{\frac{1}{2}+\epsilon} \times H^{-\frac{1}{2}+\epsilon}}\\
            &\|(v_0,v_1)\|_{H^1 \times L^2} \lesssim s^{\frac{1}{2}(\frac{1}{2}-\epsilon)}\|(u_0,u_1)\|_{H^{\frac{1}{2}+\epsilon} \times H^{-\frac{1}{2}+\epsilon}}
        \end{aligned}
    \end{equation}
\end{prop}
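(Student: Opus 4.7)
The proposition is a direct consequence of the heat-flow Bernstein inequalities in Lemma~\ref{lem:hfo1}; the decomposition itself is automatic from the definition $P_{<s}=I-P_{\geq s}$, so the only task is to verify the two quantitative estimates in \eqref{IV truc}.

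For the high-frequency piece I would apply the first Bernstein inequality with $\beta=\tfrac14+\tfrac{\delta_1}{2}$ and $\alpha=\tfrac14+\tfrac{\delta}{2}$. The hypothesis $0<\delta_1<\delta$ secures $0\leq\beta<\alpha<\beta+1$, and the lemma yields
\begin{equation*}
    \|\omega_0\|_{H^{1/2+\delta_1}}=\|(-\Delta)^{1/4+\delta_1/2}P_{<s}u_0\|_2\lesssim s^{(\delta-\delta_1)/2}\|u_0\|_{H^{1/2+\delta}}.
\end{equation*}
The velocity component $\omega_1\in H^{-1/2+\delta_1}$ lies in a negative Sobolev space, to which Lemma~\ref{lem:hfo1} as stated does not immediately apply. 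I would handle this by commuting a fractional power of $-\Delta$ through $P_{<s}$ (both are Fourier multipliers of $-\Delta$ and therefore commute), reducing to Bernstein applied to $(-\Delta)^{-1/4+\delta/2}u_1\in L^2$; equivalently, a direct spectral-calculus computation shows the multiplier $\lambda^{(\delta_1-\delta)/2}(1-e^{-s\lambda})$ is $O(s^{(\delta-\delta_1)/2})$ uniformly in $\lambda\geq1$. Either route produces the same factor.

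For the low-frequency piece I would use the second Bernstein inequality with $\beta=\tfrac14+\tfrac{\delta}{2}$ and $\alpha=\tfrac12$, valid for $\delta<\tfrac12$ (and trivial by Poincar\'e otherwise), giving
\begin{equation*}
    \|v_0\|_{H^1}\lesssim s^{(\delta-1/2)/2}\|u_0\|_{H^{1/2+\delta}}.
\end{equation*}
The $L^2$-bound on $v_1$ uses the same commutation trick: factor $u_1=(-\Delta)^{1/4-\delta/2}g$ with $g=(-\Delta)^{-1/4+\delta/2}u_1\in L^2$, move $P_{\geq s}$ past the power of $-\Delta$, and apply the second Bernstein inequality with $\alpha=\tfrac14-\tfrac{\delta}{2}$ and $\beta=0$. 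The same $s^{(\delta-1/2)/2}$ emerges, so combining gives the stated bound on $\|(v_0,v_1)\|_{H^1\times L^2}$. I take the exponent written as $s^{(s-1)/2}$ in the statement to be a typographical slip and should read $s^{(\delta-1/2)/2}$, since $s$ is already playing the role of the heat-flow parameter.

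There is no serious obstacle here; the whole proof is bookkeeping. The only mild point worth flagging is that Lemma~\ref{lem:hfo1} is phrased only for non-negative Sobolev regularity, so the components living in $H^{-1/2+\delta}$ require the short spectral-calculus reduction described above. Once that is in place, the four estimates (two high-frequency, two low-frequency) assemble into \eqref{IV truc} immediately.
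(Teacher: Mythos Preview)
Your proof is correct and follows exactly the route the paper takes: the paper's entire argument is the one line ``(\ref{IV truc}) comes from Bernstein inequality,'' and you have simply unpacked that line, including the commutation through $(-\Delta)^{\gamma}$ needed to reach the negative-index components and the identification of the typo $s^{(s-1)/2}$ for $s^{(\delta-1/2)/2}$. Nothing further is required.
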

(\ref{IV truc}) comes from Bernstein inequality. If we choose s small enough depending on \\
$\|(u_0,u_1)\|_{H^{\frac{1}{2}+\epsilon} \times H^{-\frac{1}{2}+\epsilon}}$, then $\|(\omega_0,\omega_1)\|_{H^{\frac{1}{2}+\epsilon_1} \times H^{-\frac{1}{2}+\epsilon_1}}<<1$. Thus by small data result, the evolution of $(\omega_0,\omega_1)$ under the equation (\ref{eq:nlw}) exists globally in time.

\begin{cor}\label{omega estimate}
    Let $(\omega,\omega_t)$ be the solution to (\ref{eq:nlw}) with initial data $(\omega_0,\omega_1)$. For any $\delta_0>0$, there exists $s>0$ small enough, such that $(\omega,\omega_t)$ exists globally in time and satisfies the following bound
    \begin{equation}
        \begin{aligned}
            &\sup_t\|(\omega,\omega_t)\|_{H^{\frac{1}{2}+\frac{\epsilon}{2}} \times H^{-\frac{1}{2}+\frac{\epsilon}{2}}}<\delta_0 \\
            &\|\omega\|_{\hnorm{\frac{2(d+1)}{d-1}}} < \delta_0
        \end{aligned}
    \end{equation}
    Moreoever, there exists $\delta_\epsilon$ which depends on $\epsilon$, such that
    \begin{equation}
        \|r^{\frac{d-1}{2}-\delta_\epsilon}\omega\|_{L^\infty_{t,x}(I\times\{r<R\}))}+\|e^r\omega\|_{L^\frac{2(d+1)}{d-3}_{t,x}(I\times\{r>R\})}+\|e^{\frac{d-1}{4}r}\omega\|_{L^{\frac{4(d+1)}{d-1}}_{t,x}(I\times\{r>R\})}\lesssim \delta_0
    \end{equation}
\end{cor}
\begin{proof}
    It only remains to prove the last inequality, which follows from the interpolation between
    \begin{equation*}
        \begin{aligned}
            \|sinh^\frac{d-1}{2}r\omega\|_{L^\infty_{t,x}}&\lesssim \sup_t\|\omega\|_{H^{\frac{1}{2}+\frac{\epsilon}{2}}}\\
            \|\omega\|_{L^\infty_{t,x}}&\lesssim \sup_t\|\omega\|_{H^{\frac{d}{2}+\frac{\epsilon}{2}}}\\
            \|\omega\|_{L^\frac{2(d+1)}{d-1}_{t,x}}&\lesssim \|(\omega_0,\omega_1)\|_{H^\frac{1}{2}\times H^{-\frac{1}{2}}}
        \end{aligned}
    \end{equation*}
\end{proof}

Now, suppose u(t,x) solves equation (\ref{eq:nlw}) on time interval I. We cut the high frequency evolution $\omega$ from u and denote $v=u-\omega$. Let $E(t)$ be the energy of v for $t\in I$. By (\ref{IV truc}), we have\\

\begin{equation}
    E(0)\lesssim s^{\delta-\frac{1}{2}}\|(u_0,u_1)\|_{H^{\frac{1}{2}+\delta}\times H^{-\frac{1}{2}+\delta}}\lesssim_{\|(u_0,u_1)\|_{H^{\frac{1}{2}+\delta}\times H^{-\frac{1}{2}+\delta}}} 1
\end{equation}
Our goal now is to show that the energy of v stays bounded throughout I. Suppose that is true, then by local well-posedness theory, v can be extended beyond I and global well-posedness follows. v solves the following equation on I:
\begin{equation}
    \begin{aligned}
        &v_{tt}-\Delta v+v^\frac{d+3}{d-1}=\mathcal{N}\\
        &v(0)=v_0,\ \ v_t(0)=v_1
    \end{aligned}
\end{equation}
where $\mathcal{N}=-u^\frac{d+3}{d-1}+\omega^\frac{d+3}{d-1}+v^\frac{d+3}{d-1}$\\
We will consider the cases where $3\leq d \leq 5$ and $d>5$ separately. This is because in higher dimension the nonlinearity has lower power, which requires different treatment compared to lower dimensional case.

%--------------------------Proof of 3<=d<=5----------------------------

\subsection{Case I: $3\leq d \leq 5$}
In this case, the error term $\mathcal{N}\lesssim |v^\frac{d-1}{4}\omega|+|v\omega^\frac{4}{d-1}|$.
And the derivative of $E(t)$ writes as
\begin{equation}
    \frac{dE(t)}{dt}=<\mathcal{N},v_t>\lesssim \left|\int v^\frac{4}{d-1}\omega v_t\right|+\left|\int v\omega^\frac{4}{d-1} v_t \right|
\end{equation}

For heuristic, let us look at when $d=3$ and the second term in $\mathcal{N}=\left|\int v\omega^2 v_t \right|$. We can estimate it by 
\begin{equation*}
    \begin{aligned}
        \left|\int v\omega^2 v_t \right|&\lesssim \|v\|_6\|v_t\|_2\|\omega\|_6\\
        &\lesssim \|\omega\|_6^2E(t)
    \end{aligned}
\end{equation*}
If we temporarily overlook the presence of the first term, the Gr\"{o}nwall inequality would give us $E(t)\lesssim e^{\|\omega\|_{\norm{2}{6}}}\lesssim 1$. However, the term $\left|\int v^2\omega v_t\right|$ cannot be estimated in the same fashion, for it yields $E^{\frac{3}{2}}(t)$ which is not enough for Gronwall inequality to rule out finite time blowup. For this purpose, we introduce the following modified energy, which takes advantage of the Morawetz potential defined in Section 3.

\begin{defn}[Modified Energy]
    Under the previous settings, define in terms of v, 
    \begin{equation}
        \mathcal{E}(t)=E(t)-c_1M_1(t)-c_2M_2(t)-c_3M_3(t)-c_4M_4(t)
    \end{equation}
    where $M_j$ are defined in section 3, with $0<\delta<1$ to be determined for $M_2$ and $M_4$. $c_j>0$ are small constants to be determined.
\end{defn}

%------------------------Core Estimate-----------------------------

\begin{prop}\label{Core Estimate}
    For any $0<\delta<1$, there exist $c_j>0$ and 
    \begin{enumerate}
        \item $\mathcal{E}(t) \sim E(t)$ for $t\in I$
        \item There exists $R>0$, such that the time derivative of $\mathcal{E}$ is bounded by
        \begin{equation}\label{eq energy growth rate}
            \frac{d\mathcal{E}}{dt}\lesssim \|\omega\|_\frac{2(d+1)}{d-1}^\frac{2(d+1)}{d-1} + \|e^r\omega\|_{L^\frac{2(d+1)}{d-3}_x(r>R)}^\frac{2(d+1)}{d-3} + \|e^{\frac{d-1}{4}r}\omega\|_{L^\frac{4(d+1)}{d-1}_x(r>R)}^\frac{4(d+1)}{d-1}
        \end{equation}
    \end{enumerate}
    
\end{prop}

Let us postpone the proof of Proposition \ref{Core Estimate} and continue on to the proof of Theorem \ref{Main Thm}. We will see that Theorem \ref{Main Thm} is a direct result of Proposition \ref{Core Estimate}. \\

\emph{Proof of the theorem \ref{Main Thm}:} Integrating (\ref{eq energy growth rate}) in time will yield
\begin{equation}
    \sup_t\mathcal{E}(t) \lesssim \|\omega\|_{L^\frac{2(d+1)}{d-1}_{t,x}}^\frac{2(d+1)}{d-1} + \|e^r \omega\|_{\hnorm{\frac{2(d+1)}{d-3}}(I\times \{r>R\})}^\frac{2(d+1)}{d-3} + \|e^{\frac{d-1}{4}r}\omega\|_{\hnorm{\frac{4(d+1)}{d-1}}(I\times \{r>R\})}
\end{equation}
Recall from Corallary \ref{omega estimate} that
\begin{equation}
    \sup_t \mathcal{E}(t) \lesssim \mathcal{E}(0) + \delta_0 
\end{equation}
Combining the preceding estimate with part (1) of \ref{Core Estimate} yields
\begin{equation}\label{Bounded Energy}
    \sup_{t\in I} E(t) \lesssim \sup_{t\in I} \mathcal{E}(t) \lesssim \mathcal{E}(0) + \delta_0 
\end{equation}
This proves global wellposedness. \\
Scattering follows directly from Morawetz estimate and (\ref{Bounded Energy})
\begin{equation}
    \|v\|_{\hnorm{\frac{2(d+1)}{d-1}}(I\times \hs)}^\frac{2(d+1)}{d-1} \lesssim \sup_{t\in I}E(t) \lesssim \mathcal{E}(0) + \delta_0 
\end{equation}
This concludes the proof of Theorem \ref{Main Thm}.

Now, we prove Proposition \ref{Core Estimate}.
\begin{proof}
    Part (1) of Proposition \ref{Core Estimate} easily follows from the boundedness of $|\nabla a|$ and $|\phi|$(except for $M_4$)
    \begin{equation*}
        |\int \phi(v_t\nabla a\cdot \nabla v)|\lesssim \|\phi\|_\infty\|\nabla a\|_\infty \|v_t\|_2\|\nabla v\|_2
    \end{equation*}
    By Hardy inequality,
    \begin{equation*}
        |\int \phi(v_t \frac{v}{r})|\lesssim \|\phi\|_\infty \|v_t\|_2\|\nabla v\|_2
    \end{equation*}
    Thus
    \begin{equation*}
        M_j(t)\lesssim E(t)
    \end{equation*}
    for j=1,2,3
    For $M_4$, 
    \begin{equation*}
        \begin{aligned}
            |M_4(t)|&\lesssim \int_{0<r\leq 1}| v_t\nabla v \cdot r^\delta logr|+|v_t\frac{v}{r}r^\delta logr|\\
            &\lesssim \|v_t\|_2\|\nabla v\|_2
        \end{aligned}
    \end{equation*}
    This proves part (1) of Proposition \ref{Core Estimate}, by choosing $c_j$ small enough.
    In order to prove part (2), we estimate $\frac{d\mathcal{E}}{dt}$ using results from Corollary \ref{cor MP 1,2} and \ref{cor MP 3,4}. In fact, we have

%-------------------------Modified Energy Growth Rate--------------------------
    
    \begin{equation}\label{growth rate}
    \begin{aligned}
        \frac{d\mathcal{E}(t)}{dt}&=\frac{dE(t)}{dt}-c_1\frac{dM_1(t)}{dt}-c_2\frac{dM_2(t)}{dt}-c_3\frac{dM_3(t)}{dt}-c_4\frac{M_4(t)}{dt}\\
        &\leq \|v^\frac{4}{d-1}\omega v_t\|_1 + \|v\omega^\frac{4}{d-1}v_t\|_1 \\
        & - c_1\left( \frac{1}{C}\|e^{-r}\nabla v\|_2 + \frac{1}{C}\|v\|_{\frac{2(d+1)}{d-1}}^{\frac{2(d+1)}{d-1}}- C\|\mathcal{N}\nabla v \|_1 - C\|\mathcal{N}v\|_1 \right)\\
        & - c_2\left( \frac{1}{C}\|r^{\frac{-1+\delta}{2}}\nabla v\|_{L^2_x(r<R)}^2 + \frac{1}{C}\|r^{\frac{-3+\delta}{2}}v\|_{L^2_x(r<R)}^2 - C\|\mathcal{N}\nabla v\|_1 - C\|\mathcal{N}\frac{v}{r^{1-\delta}}\|_1 \right)\\
        & - c_3\left( \frac{1}{C}\|e^{-r}v_t\|_{L^2_x(r>R)}^2 - C\|e^{-r}\nabla v\|_2^2 - C\|v\|_{\frac{2(d+1)}{d-1}}^{\frac{2(d+1)}{d-1}} - C\|q(x)u\|_2^2 - \|\mathcal{N}\nabla u\|_1 - C\|\mathcal{N}u\|_1\right)\\
        & - c_4\left( \frac{1}{C}\|r^{\frac{-1+\delta}{2}}\|_{L^2_x(r<R)}^2 - C\|r^{\frac{-1+\delta}{2}}\nabla u\|_{L^2_x(r<R)}^2 - C\|r^{-\frac{d-1}{2(d+1)}}u\|_{L^{\frac{2(d+1)}{d-1}}_x(r<R)}^{\frac{2(d+1)}{d-1}} - C\|r^{\frac{-3+\delta}{2}}u\|_{L^2_x(r<R)}^2 \right.\\
        & \ \ \ \ \ \left. - C\|\mathcal{N}\nabla u\|_1 - C\|\mathcal{N}\frac{u}{r}\|_1 \right)
    \end{aligned}
\end{equation}
    
Pick $c_3<<c_1$ and $c_4<<c_2$, then for some constant $c = c(d)>0$ we have

\begin{equation}\label{energy rate 2}
    \begin{aligned}
        \frac{d\mathcal{E}}{dt}&\leq \|v^\frac{4}{d-1}\omega v_t\|_1 + \|v\omega^\frac{4}{d-1}v_t\|_1 + C\|v^\frac{4}{d-1}\omega \nabla v\|_1 + \|v\omega^\frac{4}{d-1}\nabla v\|_1 + C\|v^\frac{4}{d-1}\omega v\|_1\\
        & + C\|v\omega^\frac{4}{d-1}v\|_1 + C\|v^\frac{4}{d-1}\omega\frac{v}{r}\|_1 + C\|v\omega^\frac{4}{d-1}\frac{v}{r}\|_1 \\
        & - c \|e^{-r}\nabla v\|_2^2 - c \|e^{-r}\nabla v\|_{L^2_x(r>R)}^2 -c\|r^\frac{-1+\delta}{2}v_t\|_{L^2_x(r<R)}^2 - c\|e^{-r}v_t\|_{L^2_x(r>R)}^2 \\
        & - c\|r^\frac{-3+\delta}{2}v\|_{L^2_x(r<R)}^2 - c\|v\|_{\frac{2(d+1)}{d-1}}^{\frac{2(d+1)}{d-1}} -c\|r^{-\frac{d-1}{2(d+1)}(1-\delta)}v\|_{L^\frac{2(d+1)}{d-1}_x(r<R)}^\frac{2(d+1)}{d-1}
    \end{aligned}
\end{equation}

The negative terms in (\ref{energy rate 2}) does not contribute to blow-up of the energy. We only need to bound the positive terms so that integrating (\ref{energy rate 2}) in time does not cause the RHS to approach infinity.\\

We will first deal with the $v_t$ terms, by H\"{o}lder inequality and Cauchy Schwartz inequality, we have
\begin{equation}\label{Term 1}
    \begin{aligned}
        \|v^\frac{4}{d-1}\omega v_t\|_{L^1_x(r<R)}&=\|r^{\frac{-1+\delta}{2}}v_t\cdot (r^{-3+\delta}v^2)^\alpha\cdot (r^{-(1-\delta)}v^\frac{2(d+1)}{d-1})^\beta\cdot (r^{\frac{d-1}{2}-\delta'}\omega)\|_{L^1_x(r<R)}\\
        &\lesssim \|r^\frac{-1+\delta}{2}v_t\|_{L^2_x(r<R)}\|r^{\frac{-3+\delta}{2}}v\|_{L^2_x(r<R)}^{2\alpha}\|r^{-\frac{d-1}{2(d+1)}(1-\delta)}v\|_{\frac{2(d+1)}{d-1}}^{\frac{2(d+1)}{d-1}\beta}\|r^{\frac{d-1}{2}-\delta}\omega\|_\infty\\
        &\lesssim \|r^{\frac{d-1}{2}-\delta}\omega\|_\infty \|r^\frac{-1+\delta}{2}v_t\|_{L^2_x(r<R)}^2  + \|r^{\frac{d-1}{2}-\delta}\omega\|_\infty \|r^{\frac{-3+\delta}{2}}v\|_{L^2_x(r<R)}^2 \\
        &+ \|r^{\frac{d-1}{2}-\delta}\omega\|_\infty \|r^{-\frac{d-1}{2(d+1)}(1-\delta)}v\|_{\frac{2(d+1)}{d-1}}^{\frac{2(d+1)}{d-1}}
    \end{aligned}
\end{equation}
where $\alpha = \frac{2}{d-1}-\frac{(d+1)(5-d)}{4(d-1)},\ \beta=\frac{5-d}{4}$ and $\delta'=(\frac{1}{2}+\alpha-\beta)\delta$. For convenience, we will abuse notation to write $\delta$ for any constant multiple of $\delta$, including $\delta'$.\\

By Corallary \ref{omega estimate}, $\|r^{\frac{d-1}{2}-\delta}\omega\|_\infty\lesssim \delta_0$. Picking $\delta_0<\frac{c}{100}$ will allow these terms to be absorbed into negative terms $-c\|r^\frac{-1+\delta}{2}v_t\|_{L^2_x(r<R)}^2 -c\|r^{-\frac{d-1}{2(d+1)}}v\|_{L^\frac{2(d+1)}{d-1}_x(r<R)}^\frac{2(d+1)}{d-1} -c\|r^{\frac{-3+\delta}{2}v}\|_{L^2_x(r<R)}^2$

For $r>R$, we estimate as following 
\begin{equation}\label{Term 2}
    \begin{aligned}
        \|v^\frac{4}{d-1}\omega v_t\|_{L^1_x(r>R)}&\lesssim \|e^{-r}v_t \cdot v^{\frac{4}{d-1}} \cdot e^r\omega\|_{L^1_x(r>R)}\\
        &\lesssim \|e^{-r}v_t\|_{L^2_x(r>R)}\|v\|_{\frac{2(d+1)}{d-1}}^\frac{4}{d-1}\|e^r\omega\|_{L^\frac{2(d+1)}{d-3}_x(r>R)}\\
        &\lesssim \delta_0\|e^{-r}v_t\|_{L^2_x(r>R)}^2 + \delta_0 \|v\|_\frac{2(d+1)}{d-1}^\frac{2(d+1)}{d-1} + C(\delta_0)\|e^r\omega\|_{L^\frac{2(d+1)}{d-3}_x(r>R)}^\frac{2(d+1)}{d-3}
    \end{aligned}
\end{equation}
Likewisde, picking $\delta_0<\frac{c}{100}$ will allow the negative terms to absorb the first two terms in (\ref{Term 2}). We have thus dealt with the first term $\|v^\frac{4}{d-1}\omega v_t\|_1$ in (\ref{Core Estimate}).

Next, we estimate the second positive term in (\ref{energy rate 2}): $\|v\omega^\frac{4}{d-1}v_t\|_1$. In fact, we can estimate it by
\begin{equation}
    \begin{aligned}
        \|v\omega^{\frac{4}{d-1}}v_t\|_{L^1_x(r>R)}&\lesssim \|v\|_\frac{2(d+1)}{d-1}\|e^{\frac{d-1}{4}r}\omega\|_{L^\frac{4(d+1)}{d-1}(r>R)}^\frac{4}{d-1}\|e^{-r}v_t\|_{L^2_x(r>R)}\\
        &\lesssim \delta_0\|v\|_\frac{2(d+1)}{d-1}^\frac{2(d+1)}{d-1} + \delta_0 \|e^{-r}v_t\|_{L^2_x(r>R)}^2 + C(\delta_0)\|e^{\frac{d-1}{4}r}\omega\|_{L^\frac{4(d+1)}{d-1}_x(r>R)}^\frac{4(d+1)}{d-1} \\
    \end{aligned}
\end{equation}

and 

\begin{equation}
    \begin{aligned}
        \|v\omega^\frac{4}{d-1}v_t\|_{L^1_x(r<R)} &\lesssim \|r^{\frac{-3+\delta}{2}}v\|_{L^2_x(r<R)} \|r^{\frac{d-1}{2}-\delta}\omega\|_{L^\infty_x(r<R)}^\frac{4}{d-1} \|r^{\frac{-1+\delta}{2}}v_t\|_{L^2_x(r<R)}\\
        &\lesssim \|r^{\frac{d-1}{2}-\delta}\omega\|_{L^\infty_x(r<R)}^\frac{4}{d-1} \|r^{\frac{-3+\delta}{2}}v\|_{L^2_x(r<R)}^2 + \|r^{\frac{d-1}{2}-\delta}\omega\|_{L^\infty_x(r<R)}^\frac{4}{d-1} \|r^{\frac{-1+\delta}{2}}v_t\|_{L^2_x(r<R)}^2
    \end{aligned}
\end{equation}
Again, picking $\delta_
0<\frac{c}{100}$ to conclude the estimate for the second term.

The third and fourth positive term in (\ref{energy rate 2}) can be estimated in the exact same way, due to $v_t$ and $\nabla v$ sharing similar estimates.

To conclude the proof of Proposition \ref{Core Estimate}, it only remains to estimate the fifth to eighth terms in (\ref{energy rate 2}). In fact, we have
\begin{equation}
    \begin{aligned}
        \|v^\frac{4}{d-1}\omega v\|_1 + \|v\omega^\frac{4}{d-1}v\|_1 \lesssim \delta_0\|v\|_\frac{2(d+1)}{d-1}^\frac{2(d+1)}{d-1} + C(\delta_0) \|\omega\|_\frac{2(d+1)}{d-1}^\frac{2(d+1)}{d-1}
    \end{aligned}
\end{equation}
For the seventh positive term in (\ref{energy rate 2}), we estimate the $r<R$ part by
\begin{equation}
    \begin{aligned}
        \|v^\frac{4}{d-1}\omega \frac{v}{r}\|_{L^1_x(r<R)} &\lesssim \|(r^{-3+\delta}v^2)^{(1+\alpha)} \cdot (r^{-(1-\delta)}v^\frac{2(d+1)}{d-1})^\beta\cdot (r^{\frac{d-1}{2}-\delta'}\omega) \|
    \end{aligned}
\end{equation}
and estimate it in the similar fashion as in (\ref{Term 1}).
For $r>R$, we apply Strichartz estimate which yields the following bound
\begin{equation}
    \begin{aligned}
        \|v^\frac{4}{d-1}\omega \frac{v}{r}\|_{L^1_x(r>R)}&\lesssim \|v\|_\frac{2(d+1)}{d-1}^2\|\omega\|_\frac{2(d+1)}{d-1}^\frac{4}{d-1}\\
        &\lesssim \delta_0 \|v\|_\frac{2(d+1)}{d-1}^\frac{2(d+1)}{d-1} + C(\delta_0)\|\omega\|_\frac{2(d+1)}{d-1}^\frac{2(d+1)}{d-1}
    \end{aligned}
\end{equation}

The eighth positive term in (\ref{energy rate 2}) can be estimated in the similar fashion.
Thus, the proof of Proposition \ref{Core Estimate} is concluded.

\end{proof}

\subsection{Case II: $d>5$} When $d>5$, we have $\frac{4}{d-1}<1$. And the term involving $v^\frac{4}{d-1}\omega$ cannot be estimated as before due to low power on v. In this case, we bound the error term $\mathcal{N}$ by
\begin{equation}
    |\mathcal{N}|=|u^\frac{d+3}{d-1}-\omega^\frac{d+3}{d-1}-v^\frac{d+3}{d-1}|\lesssim \inf \{|v\omega^\frac{4}{d-1}|,|v^\frac{4}{d-1}\omega|\}
\end{equation}
which means we can drop the $v^\frac{4}{d-1}\omega$ terms in (\ref{energy rate 2}). The remaining terms involving $v\omega^\frac{4}{d-1}$ can be estimated the same way as we did for $3\leq d \leq 5$.
This concludes the proof of theorem \ref{Main Thm}.

\section{Directions for future investigation}
In the final part of the paper, we make a few comments on the directions for further research of the global wellposedness and scattering problem of nonlinear wave equation on the hyperbolic space. 

1. \emph{Sharp results} We proved global wellposedness and scattering for equation (\ref{eq:nlw}) with initial data in $H^{2} \times H^{2-1}$, for any $s > \frac{1}{2}$. The conjecture derived from the scaling symmetry predicts positive results for initial data in $H^{\frac{1}{2}} \times H^{-\frac{1}{2}}$. We refer to \cite{dodson2018global} for a proof of the sharp result on Euclidean space. Despite the hyperbolic geometry greatly simplifying the proof for $s > \frac{1}{2}$, the author was not able to develop the necessary harmonic analysis tools on hyperbolic spaces similar to those used in \cite{dodson2018global} to treat low regularity critical problem on Euclidean spaces, e.g. an easy-to-use Littlewood Paley frequency decomposition.

2. \emph{Intercritical range} Intercritical wave equations between the conformal and energy critical state, namely with exponents $1 + \frac{4}{d-1} < p < 1 + \frac{4}{d-2}$, remains to be studied on non-euclidean spaces. We refer to \cite{dodson2023sharp} for results on Euclidean spaces. The author expects that the hyperbolic geometry should allow the proof of global wellposedness and scattering to be simplified, at least for non-sharp exponents $s > s_c$.

\bibliographystyle{amsplain}
\bibliography{ref.bib}

\end{document}